\documentclass{article}
\usepackage[margin=1in]{geometry}
\usepackage{authblk}
\usepackage[T1]{fontenc}
\usepackage{amsmath,amssymb,mathtools}
\usepackage{amsthm}
\usepackage{thmtools}
\usepackage{listings}
\usepackage{textcomp}
\makeatletter
\@ifundefined{newcounteralias}{}{%
  \renewcommand\thmt@autorefsetup{%
    \@xa\def\csname\thmt@envname autorefname\@xa\endcsname
    \@xa{\thmt@thmname}}%
}
\makeatother
\usepackage{booktabs}
\usepackage{microtype}
\usepackage[round]{natbib}
\usepackage{algorithm}
\usepackage{algpseudocode}
\usepackage[hidelinks,hypertexnames=false]{hyperref}
\usepackage[capitalize,noabbrev,nameinlink]{cleveref}
\usepackage{autonum}
\usepackage{todonotes}
\usepackage{mathtools}
\usepackage{tikz}
\usetikzlibrary{arrows.meta}
\usepackage{etoolbox}

\newtheorem{theorem}{Theorem}[section]
\newtheorem{proposition}[theorem]{Proposition}
\newtheorem{lemma}[theorem]{Lemma}

\newtheorem{definition}[theorem]{Definition}
\theoremstyle{definition}
\newtheorem{assumption}[theorem]{Assumption}
\newtheorem{example}[theorem]{Example}
\AtEndEnvironment{example}{\unskip\nobreak\hfill$\diamond$}
\newtheorem{remark}[theorem]{Remark}
\AtEndEnvironment{remark}{\unskip\nobreak\hfill$\diamond$}

\crefname{theorem}{Theorem}{Theorems}
\Crefname{theorem}{Theorem}{Theorems}
\crefname{proposition}{Proposition}{Propositions}
\Crefname{proposition}{Proposition}{Propositions}
\crefname{definition}{Definition}{Definitions}
\Crefname{Definition}{Definition}{Definitions}
\crefname{remark}{Remark}{Remarks}
\Crefname{remark}{Remark}{Remarks}
\crefname{lemma}{Lemma}{Lemmas}
\Crefname{lemma}{Lemma}{Lemmas}
\crefname{corollary}{Corollary}{Corollaries}
\Crefname{corollary}{Corollary}{Corollaries}
\crefname{assumption}{Assumption}{Assumptions}
\Crefname{assumption}{Assumption}{Assumptions}
\crefname{example}{Example}{Examples}
\Crefname{example}{Example}{Examples}
\crefname{appendix}{Appendix}{Appendices}
\Crefname{appendix}{Appendix}{Appendices}
\crefname{algorithm}{Algorithm}{Algorithms}
\Crefname{algorithm}{Algorithm}{Algorithms}

\newif\ifshowrevisions
\showrevisionsfalse
\newcommand{\revisioncolor}{\ifshowrevisions\color{red}\fi}

\newenvironment{revision}{\par\begingroup\revisioncolor}{\par\endgroup}

\DeclarePairedDelimiter{\prn}{(}{)}
\DeclarePairedDelimiter{\set}{\{}{\}}
\DeclarePairedDelimiterX{\Set}[2]{\{}{\}}{\,#1\mid#2\,}
\DeclarePairedDelimiter{\abs}{\lvert}{\rvert}
\DeclarePairedDelimiter{\norm}{\lVert}{\rVert}
\DeclarePairedDelimiter{\inpr}{\langle}{\rangle}
\DeclarePairedDelimiter{\brc}{[}{]}

\newcommand{\infconv}{\,\square\,}

\newcommand{\mdp}{\mathcal{M}}
\newcommand{\distf}{\mathcal{D}}
\newcommand{\mc}[1]{\mathcal{M}[#1]}
\newcommand{\rp}[2]{\mathrm{RPr}^{#1}_{#2}}
\newcommand{\pathmc}[2]{\mathrm{Path}^{#1}_{#2}}
\newcommand{\wrp}[1]{\mathrm{WRPr}^{\max}_{#1}}

\newcommand{\R}{\mathbb{R}}

\newcommand{\dws}{\mathrm{dw}}

\DeclareMathOperator{\dist}{dist}

\DeclareMathOperator{\proj}{proj}

\newcommand{\pos}[1]{\brc*{#1}_+}

\newcommand{\SAFE}{\textnormal{\textsc{safe}}}
\newcommand{\UNKNOWN}{\textnormal{\textsc{unknown}}}
\newcommand{\UNSAFE}{\textnormal{\textsc{unsafe}}}

\title{Convex-Analytic View of Query Selection\\for Safety Verification of Markov Decision Processes\thanks{Authors are listed in alphabetical order.}}
\author[1,2,3]{Han Bao}
\author[4,5,3]{Shinsaku Sakaue}
\author[5]{Kazuki Watanabe}
\affil[1]{The Institute of Statistical Mathematics}
\affil[2]{Tohoku University}
\affil[3]{RIKEN AIP}
\affil[4]{CyberAgent}
\affil[5]{National Institute of Informatics}
\date{\today}

\begin{document}

\maketitle

\begin{abstract}
Safety verification of a Markov decision process (MDP) asks whether the probabilities of reaching several failure states jointly lie in a convex safe region under every scheduler.
\citet{watanabe2026posterior} prove safety by refining an outer approximation of the failure probabilities, where each query bounds a nonnegative weighted sum of the probabilities.
Their rule selects weights by the largest Euclidean violation.
We present a unified framework to measure violation in an explicitly chosen error unit: directions are scored by the support-function gap between the outer approximation and the safe region, normalized by the support function of the unit.
We show that the maximal score, when positive, equals the smallest enlargement of the safe region in multiples of that unit that contains the outer approximation.
The Euclidean unit recovers the earlier rule; two new rules are generated by the unit cube and the safe region itself.
We experimentally evaluate how the choice of measurement rule affects the performance of safety verification for MDPs.
\end{abstract}

\section{Introduction}
\label{sec:intro}

Consider a robot that carries out a mission consisting of several tasks. At each step it chooses a route or an operating mode, the environment changes at random, and each task may end in a catastrophe of one of $d$ kinds, such as a collision, a fall, or a loss of communication; the mission ends when all tasks are completed or when a catastrophe occurs. A safety requirement constrains the probabilities $p_1,\dots,p_d$ of the $d$ catastrophes jointly: each $p_i$ may be at most a threshold $\theta_i$, or a severity-weighted total $\sum_iw_ip_i$ may be at most a threshold $\theta$. The permissible vectors $\prn{p_1,\dots,p_d}$ then form a safe region, represented by a nonempty compact convex set. The requirement has to hold not only for one chosen control policy but for every way of resolving the choices, because the choices may be taken by a component whose behavior is unknown or adversarial.

The robot and environment form a Markov decision process, and a scheduler resolves the choices; then we aim to ensure that the vector of failure probabilities of every scheduler lies in the safe region.
The exact set of failure probabilities is unavailable a priori due to infinitely many schedulers and the huge state space.
Here, \emph{counterexample-guided abstraction refinement (CEGAR)}~\citep{BallR02} is useful, which iteratively synthesizes the largest \emph{weighted} probability.\looseness=-1

Specifically, for a chosen nonnegative weight vector~$w$, the largest weighted probability $\sup_\sigma\sum_iw_ip^\sigma_i$ over all schedulers $\sigma$ is a single-objective problem: for a finite process it is an optimal reachability problem that standard algorithms can solve \citep{baier2008principles,etessami2008multi}, and for infinite-state probabilistic programs invariant synthesis can certify its upper bound~(see, e.g.,~\cite{ChakarovS13,Chatterjee0020,TakisakaOUH21}).
Each query $w$ yields a certified bound $u\ge\sup_\sigma\sum_{i} w_i p_i^\sigma$.
Every attainable vector is in the corresponding half-space
$\Set{p}{\sum_{i} w_i p_i\le u}$.
We form an \emph{outer approximation} by intersecting an initial set
containing all attainable vectors with the returned half-spaces.
A new bound removes points precisely when the current approximation
contains $p$ with $\sum_{i} w_i p_i>u$.
Once the approximation lies inside the safe region,
the requirement is proved for every scheduler.

In the above CEGAR approach, each query $w$ requires a full analysis of the process and therefore is often expensive, and a query in a direction in which the approximation is already tight brings nothing.
We therefore seek query directions that reveal violations of
the safe region by the current outer approximation.

Using convex analysis, we develop a framework for quantifying
these violations and choosing the next query direction.
By reincarnating the idea of \citet{Hormander1955}, we measure the set difference by the \emph{difference of the support functions} of two convex sets:
writing the support functions of two nonempty compact convex sets $U$ and $S$ by $h_U$ and $h_S$ respectively, the basic course of convex analysis teaches us the equivalence $U\subseteq S \iff h_U\le h_S$~\citep{rockafellar1970convex}.
Imagine $U$ and $S$ are the outer approximation and safe region, respectively.
Among nonnegative weight vectors whose entries sum to one,
a natural query is a maximizer of $h_U(w)-h_S(w)$.
To compare violations across directions in a chosen geometry, we normalize the support gap to consider $(h_U-h_S)/h_E$, where $E$ is a user-chosen convex set, called an \emph{error unit}.
If $E$ is the nonnegative part of the Euclidean unit ball, the normalizer $h_E(w)$ equals $\norm{w}_2$, and we recover the projection rule of \citet{watanabe2026posterior}.
If $E$ is the unit cube $\brc{0,1}^d$, the normalizer $h_E(w)$ equals $\norm{w}_1$, which is one whenever the entries of $w$ sum to one.
If $E=S$, the normalizer is the support function $h_S$ of the safe region itself, and the largest value of the normalized support gap is expressed through the \emph{gauge function} of $S$.
We show that the query based on $E=[0,1]^d$ and $E=S$ can typically be computed more efficiently, which actually accelerates safety verification empirically.

The paper is organized as follows.
\cref{sec:mdp-setup} gives a self-contained introduction to probabilistic model checking, for those who are not familiar with this topic.
In \cref{sec:setup}, we give our general problem statement of the set containment test, which can be read without the context of probabilistic model checking.
\cref{sec:scores} presents our query framework based on the support functions, and instantiates it with various choices of the error unit.
In \cref{sec:experiments}, we evaluate different query strategies on a common oracle that relies on the existing probabilistic model checker \emph{Storm}~\citep{HenselJKQV22}.

\paragraph{Notation.}
We write $\pos{\cdot} = \max\set{\cdot, 0}$.
For $d\ge1$, we write $\triangle_d\coloneqq\Set{p\in[0, 1]^d}{\sum_{i=1}^d p_i=1}$ for the probability simplex and $\triangle_d^{\le}\coloneqq\Set{p\in[0, 1]^d}{\sum_{i=1}^d p_i\leq 1}$ for the subdistribution simplex. Given a finite set $A$, we write $\distf(A)$ for the set of distributions $\nu\colon A\rightarrow [0, 1]$ over $A$, that is, $\sum_{a\in A} \nu(a) = 1$.
A set $X\subseteq\R_+^d$ is \emph{downward closed}
if $x\in X$ and $0\le y\le x$ imply $y\in X$, where the inequality is coordinatewise.
For $X\subseteq\R_+^d$, its \emph{downward closure} is
\begin{equation}
\dws(X)\coloneqq
\Set{y\in\R_+^d}{y\le x\text{ for some }x\in X}.
\end{equation}
For compact nonempty $A\subseteq\R^d$ and $w\in\R^d$, the \emph{support function} is $h_A\prn{w}\coloneqq\max_{x\in A}\inpr{w,x}\in\R$;
the \emph{gauge function} is $\gamma_A\prn{x}\coloneqq\inf\Set{\lambda\ge0}{x\in\lambda A}$.
Support functions are convex and positively homogeneous, and $h_{A+B}=h_A+h_B$ holds for the Minkowski sum $A+B$.

\section{Related Work}
\label{sec:related_work}
We consider safety verification through the lens of the \emph{set containment problem}.
Its basic form is to ask whether a convex set $K$ (representing the system's exact behavior) is a subset of another convex set $S$ (describing safety conditions), which has been studied from various angles.
When $K$ is completely known, the containment test is relatively easy.
Specifically, general polyhedral or quadratic set containment is well-characterized via duality~\citep{mangasarian2002set,jeyakumar2003characterizing}, and its computational complexity depends on the structures of $K$ and $S$~\citep{gritzmann1994complexity}.

We usually need to estimate system's behavior $K$.
While convex set estimation is common in statistical learning~\citep{prince1990reconstructing,Guntuboyina2012AOS,Hannah2013JMLR}, it does not guarantee \emph{soundness}, namely, the estimated $\hat K$ lacks the guarantee $K\subseteq\hat K$, which is crucial for safety verification.
The specificity of safety verification is the downward-closedness assumptions on $K$ and $S$ (see \cref{definition:verification}), which enables soundness guarantee.
In this line, sandwich algorithms for convex multi-objective optimization~\citep{solanki1993approximating,RennenDH11,lammel2023convergence} are well-known: therein, we iteratively approximate a Pareto front (or the behavior set $K$) by maintaining polyhedral inner and outer approximations.
At each iteration, we solve a linearly scalarized problem, which yields a new supporting half-space to minimize the Hausdorff distance between the inner and outer approximations.
Sandwich algorithms have been used for multi-objective probabilistic
model checking of MDPs~\citep{forejt2012pareto,Quatmann23,HartmannsQV26}. %

Whereas sandwich algorithms are common, it does not scale to higher dimensions, limited up to $2$ or $3$ dims.
More severely, they estimate a Pareto front without leveraging structures of the safe region $S$.
\citet{watanabe2026posterior} solve these issues based on a CEGAR framework~\citep{BallR02,ClarkeGJLV03} for safety verification of probabilistic programs.
In particular, they refine the outer approximation by selecting a query with the largest Euclidean violation, effectively leveraging structures of the safe region $S$. %
This is the closest work to ours, and we generalize their query selection from the convex-analytic viewpoint.

\section{Formal Verification of Probabilistic Systems}
\label{sec:mdp-setup}

Our study is motivated by verification of \emph{Markov decision processes (MDPs)}~\citep{Puterman94}.
While the set containment problem we study belongs to computational geometry and can be considered independently of formal verification, we summarize the context of formal verification to help the ML audience better understand our motivation.
We specifically review the probabilistic model checking algorithm developed by \citet{watanabe2026posterior}.
Readers may safely jump to \cref{sec:setup} if they are sufficiently motivated with the set containment problem.

\subsection{Review of MDPs in Formal Verification}

We recall basics of MDPs for formal verification; see \citet{baier2008principles} for probabilistic model checking of MDPs. A (fully observable, discrete-time, finite-state) \emph{MDP} is a tuple $\mdp \coloneqq (Q, q_{\mathtt{i}}, T, A, P)$, where (i) $Q$ is a finite \emph{state} set, (ii) $q_{\mathtt{i}} \in Q$ is the \emph{initial state}, (iii) $T \subseteq Q$ is the set of \emph{target states} with $d \coloneqq |T|$, (iv) $A$ is a finite \emph{action} set, and (v) $P\colon Q \times A \times Q \rightarrow [0,1]$ is a \emph{transition function} satisfying the following conditions:
\begin{itemize}
    \item $\sum_{q' \in Q} P(q,a,q') \in \set{0,1}$ holds ($\forall (q,a)\in Q\times A$).%
    \item Every target state is a sink state: for every $t \in T$ and $a \in A$, we have $\sum_{q' \in Q} P(t,a,q') = 0$. 
\end{itemize}

In this work, we focus on \emph{reachability probabilities} for individual target states $t \in T$, instead of assigning a reward to each state.
To this end, we introduce the reachability of MDP states.
A \emph{scheduler} (or \emph{policy}) is a function $\sigma\colon Q\times (A\times Q)^{\ast}\rightarrow \distf(A)$. We write $\Theta$ for the set of schedulers.
For each history $h$, let $\operatorname{last}(h)$ denote its final state.
A scheduler $\sigma$ for an MDP $\mdp$ induces a \emph{discrete-time Markov chain} $\mc{\sigma}$ with a countable state space, defined as follows:
\begin{itemize}
    \item The state space is the set of histories $H\coloneqq Q\times (A\times Q)^{\ast}$, and the initial state is $q_{\mathtt{i}}$. 
    \item The transition function $P^{\sigma}\colon H\times H\rightarrow [0,1]$ is
    \[
P^{\sigma}(h_1,h_2)\coloneqq
    \sigma(h_1)(a)\cdot P(\operatorname{last}(h_1),a,q),
    \]
    if $h_2=h_1\cdot(a,q)$
              for some $(a,q)\in A\times Q$, and 
    $P^{\sigma}(h_1,h_2)\coloneqq 0$ otherwise.         
    Note that $P$ is the transition function of the original MDP $\mdp$.
\end{itemize}

Given scheduler $\sigma\in \Theta$ and a target state $t\in T$, 
a \emph{path} $\xi$ on $\mc{\sigma}$ to $t$ is a finite sequence $(h_1,\cdots, h_n)$ of histories such that $h_1 = q_{\mathtt{i}}$ and $\operatorname{last}(h_n) = t$. We write $\pathmc{\sigma}{t}$ for the set of paths to $t$.  Given a path $\xi\in \pathmc{\sigma}{t}$, we define its \emph{path probability} $P^{\sigma}(\xi)$ by $P^{\sigma}(\xi)\coloneqq \prod^{n-1}_{i=1} P^{\sigma}(h_i, h_{i+1})$, where $\xi = (h_1,\cdots,h_n)$. 
The \emph{reachability probability} $\rp{\sigma}{t}$ to a target state $t$ under a scheduler $\sigma$ is given by%
\[
\rp{\sigma}{t}\coloneqq \sum_{\xi\in \pathmc{\sigma}{t}} P^{\sigma}(\xi). 
\]
Finally, we introduce the reachability objective.
\begin{definition}[reachability objective]
 Given an MDP $\mdp$ and a function $w\in \distf(T)$,
 the \emph{maximum weighted reachability} $\wrp{w}\in [0, 1]$ is defined by%
 \footnote{
   For any rational-valued $w$,
   $\wrp{w}$ can be computed in polynomial time by solving
   a linear program~\citep{baier2008principles}.
   Moreover, this value is attained by a deterministic
   memoryless scheduler.
 }
\[
     \wrp{w} = \sup_{\sigma\in \Theta}\sum_{t\in T} w(t)\cdot \rp{\sigma}{t}. 
\]
 \end{definition}

Although linear programming computes $\wrp{w}$ in polynomial time, \emph{value iteration}~\citep{ChatterjeeH08} is often efficient in practice and can be extended to compute both lower and upper bounds on $\wrp{w}$~\citep{HaddadM18,QuatmannK18,HartmannsK20,WatanabeVJH24}.
\begin{definition}
    An \emph{upper bound} $u$ of $\wrp{w}$ is a scalar $u\in[0, 1]$ such that $\wrp{w} \leq u$.
    Writing $T=\set{t_1,\dots,t_d}$, a \emph{componentwise lower bound} $l$ of $\wrp{w}$ is a vector $l\in [0, 1]^d$ such that there exists a scheduler $\sigma\in\Theta$ satisfying $l_i\le\rp{\sigma}{t_i}$ for $\forall i\in[d]$.
\end{definition}
Value iteration requires exponentially many iterations in the worst case~\citep{HaddadM18}, but it often outperforms general-purpose LP solvers on common verification benchmarks~\citep{HartmannsJQW23}.\looseness=-1

In what follows, we identify $w\in\distf(T)$ with the vector in $\triangle_d$ having coordinates $w_i\coloneqq w(t_i)$ for $i=1,\dots,d$.

\subsection{CEGAR Loop for Safety Verification}

\cite{watanabe2026posterior} propose the following safety verification problem of MDPs:\footnote{Precisely,~\cite{watanabe2026posterior} study a safety verification problem of probabilistic programs,  which can be seen as MDPs with countably infinite states. In this paper, we restrict our attention to finite-state MDPs, which is still a reasonably expressive class of real-world products~\citep{baier2008principles,ForejtKNP11}, in order to implement an efficient oracle on top of Storm~\citep{HenselJKQV22}. } 
\begin{definition}[safety verification]
\label{definition:verification}

Given an MDP $\mdp$ with the target states $T = \{t_1,\cdots, t_d\}$ and a safe region $S\subseteq [0, 1]^d$ (that is nonempty closed convex, and moreover, downward closed), \emph{safety verification} asks if the following set containment holds:
\[
    \big(\rp{\sigma}{t_1},\dots, \rp{\sigma}{t_d}\big)\in S \quad \forall \sigma\in\Theta. 
\]  
We say that $\mdp$ is \emph{safe} (w.r.t.~$S$) if the above set containment holds, and otherwise \emph{unsafe}. 
\end{definition}

Below we present an illustrative verification example.

\begin{figure}[t]
\begin{center}
\scalebox{0.8}{
\begin{tikzpicture}[
    >={Latex[length=2mm]},
    font=\small,
    state/.style={circle,draw,thick,minimum size=8mm,inner sep=1pt},
    target/.style={state,double,double distance=1pt},
    branch/.style={circle,fill,inner sep=1.6pt},
    transition/.style={->,thick},
    probability/.style={fill=white,inner sep=1.4pt},
    description/.style={font=\scriptsize,text=black!75}
]
    \node[state] (init) at (0,2.85) {$q_{\mathtt{i}}$};
    \node[branch] (a) at (-2.55,0) {};
    \node[branch] (b) at ( 2.55,0) {};
    \node[target] (t1) at (0, 1.8) {$t_1$};
    \node[state]  (ok) at (0, 0) {$q_{\mathrm{ok}}$};
    \node[target] (t2) at (0,-1.1) {$t_2$};
    \draw[transition] (0,4.0) -- (init);
    \draw[transition] (init.west)
        .. controls (-2.55,2.85) and (-2.55,1.7) .. (a)
        node[pos=.58,left,align=right] {$a_1$};
    \draw[transition] (init.east)
        .. controls (2.55,2.85) and (2.55,1.7) .. (b)
        node[pos=.58,right,align=left] {$a_2$};
    \draw[transition] (a) -- node[probability,above,sloped] {$0.005$} (t1);
    \draw[transition] (a) -- node[probability,above]        {$0.99$}  (ok);
    \draw[transition] (a) -- node[probability,below,sloped] {$0.005$} (t2);
    \draw[transition] (b) -- node[probability,above,sloped] {$0.009$} (t1);
    \draw[transition] (b) -- node[probability,above]        {$0.99$}  (ok);
    \draw[transition] (b) -- node[probability,below,sloped] {$0.001$} (t2);
    \node[description,above] at (0,2.14) {};
    \node[description,below] at (0,-0.45) {};
    \node[description,below] at (0,-1.94) {};
    \path (ok) edge [loop above] node {1.0} (ok);
\end{tikzpicture}
}
\vspace{-25pt}
\caption{
  An example MDP representing a toy probabilistic system.
  Here $t_1$ and $t_2$ are failure states.
}
\label{fig:exMDP}
\end{center}
\end{figure}

\begin{example}
Consider the MDP in~\cref{fig:exMDP}, with state set
$Q\coloneqq\{q_{\mathtt{i}},q_{\mathrm{ok}},t_1,t_2\}$.
The transition probabilities for action $a_1$ from the initial state
$q_{\mathtt{i}}$ are given by
$P(q_{\mathtt{i}},a_1,t_1) \coloneqq 0.005$, $P(q_{\mathtt{i}},a_1,q_{\mathrm{ok}}) \coloneqq 0.99$, and $P(q_{\mathtt{i}},a_1,t_2) \coloneqq 0.005$.
The corresponding probabilities for action $a_2$ are
$0.009$, $0.99$, and $0.001$, respectively.
The target states $t_1$ and $t_2$ represent two undesirable outcomes.
Actions are selected by, e.g., user inputs.

Suppose that we execute the system twice under the same
fixed environmental choice with independent randomness.
We require the probability that both executions reach the same target state to be at most a threshold $\theta>0$,
regardless of user inputs.
This requirement is equivalent to safety verification
with the safe region
    $S\coloneqq
    \left\{
        (p_1,p_2)\in[0,1]^2
        \;\middle|\;
        p_1^2+p_2^2\leq\theta
    \right\}$.
\end{example}

We review a CEGAR-based solution to safety verification proposed by \cite{watanabe2026posterior}.
Given an MDP $\mdp$, 
we set the \emph{achievable set} 
\[
K_{\mdp} \coloneqq \dws(\{(\rp{\sigma}{t_1},\dots, \rp{\sigma}{t_d})\in[0, 1]^d\mid \sigma\in \Theta \}).
\]
The achievable set $K_{\mdp}$ is a convex polytope~\citep{etessami2008multi}.
Since the safe region $S$ is downward-closed, safety verification is equivalent to checking the inclusion $K_{\mdp}\subseteq S$.
It is possible to test all vertices of the polytope $K_{\mdp}$
to conclude $K_{\mdp}\subseteq S$, but intractable due to the
existence of exponentially many deterministic memoryless schedulers.

\begin{algorithm}[t]
\caption{A CEGAR-Based Safety Verification Algorithm~\citep{watanabe2026posterior}}
\label{alg:cegarloop}
\begin{algorithmic}[1]
\Require An MDP $\mdp$ and a safe region $S$.
\State $U_0\gets\triangle_d^{\le}$; $t\gets0$
\While{$U_t\not\subseteq S$}
  \State $w_t\gets\Call{Select}{U_t,S}$
  \State $(u,l)\gets\Call{Query}{\mdp,w_t}$
    \If{$l\notin S$}
      \State \Return $\UNSAFE$ (meaning that $K_{\mdp}\not\subseteq S$)
    \Else
      \State $U_{t+1}\gets U_t\cap\Set{p}{\inpr{w_t,p}\le u}$
    \EndIf
  \State $t\gets t+1$
\EndWhile
\State \Return \SAFE\, (meaning that $K_{\mdp}\subseteq S$)
\end{algorithmic}
\end{algorithm}

To bypass the enumeration of schedulers,
the CEGAR-based algorithm (\cref{alg:cegarloop})
maintains an outer approximation $U_t$ of
$K_{\mdp}$ at each $t$-th refinement step, ensuring that
$K_{\mdp}\subseteq U_t$.%
\footnote{
The CEGAR loop of~\cite{watanabe2026posterior} is designed
for infinite-state MDPs and does not report $\UNSAFE$.
Extending it to allow this verdict for finite-state MDPs,
as we do here, is straightforward.
}
If $U_t\subseteq S$, we conclude that $\mdp$ is safe.
Since $U_t$ is a convex polytope by its construction and $S$ is convex,
this inclusion can be checked by testing whether every
vertex of $U_t$ lies in $S$.

The key step is the refinement procedure.
When $U_t\not\subseteq S$, the algorithm selects a weight
$w_t$ according to a specified criterion (line~3).
It then computes an upper bound $u\in[0,1]$
and a componentwise lower bound $l\in[0,1]^d$ of  $\wrp{w_t}$. 
These can be computed using probabilistic model checkers such as Storm~\citep{HenselJKQV22}.
If $l\notin S$, the algorithm returns $\UNSAFE$
(lines~5--6); otherwise, it refines the outer approximation
$U_t$ by $u$ (line~8).
While an $L^2$-projection refinement has been proposed~\citep{watanabe2026posterior},
we will see that the performance of~\cref{alg:cegarloop} depends strongly on weight selection of line~3.
We generalize this weight selection step in \cref{sec:scores}.

\section{Problem Setup}
\label{sec:setup}

Motivated by safety verification in \cref{sec:mdp-setup}, here is our primary problem setup in this paper.
For an unknown nonempty compact convex set
$K\subseteq\triangle_d^{\le}$ that is downward closed,
our goal is to test $K\subseteq S$, where
$S\subseteq\triangle_d^{\le}$ is a nonempty compact convex
downward-closed safe region.
Despite the lack of a priori access to $K$,
we can \emph{query} in direction $w\in\triangle_d$ to ask an oracle, who returns a scalar $u$ satisfying $h_K(w)\le u$.
Then, we maintain an \emph{outer approximation} $U\subseteq\triangle_d^\le$ satisfying $K\subseteq U$.
Beginning with $U_0\coloneqq\triangle_d^\le$, we refine it at every step by
\begin{equation}
    \label{eq:update}
    U_{t+1} \coloneqq U_t\cap\Set{x\in\R^d}{\inpr{w_t,x}\le u_t},
\end{equation}
where $(w_t,u_t)$ is the query-oracle pair at step $t$.
This refinement maintains the outer approximation $K\subseteq U_t$ for all $t$, and we halt the refinement loop once $U_t\subseteq S$ is met, where the safety guarantee $K\subseteq S$ is obtained.
The main focus in this paper is to synthesize an efficient query direction $w$.
The following geometric properties are assumed hereafter.
\begin{assumption}
\label{ass:sets}
The safe region $S\subseteq\triangle_d^{\le}$ is nonempty, compact, convex, and downward closed. The outer approximation $U\subseteq\triangle_d^{\le}$ is nonempty, compact, and convex.
\end{assumption}

\section{Unified Query Selection}
\label{sec:scores}

To test the set containment, we use support functions extensively.
For nonempty compact convex $U,S\subseteq\R^d$,
the containment $U\subseteq S$ is equivalent to $h_U(w)\le h_S(w)$ for all $w\in\R^d$~\citep[Section 13]{rockafellar1970convex}.
Under our downward-closed assumption (\cref{ass:sets}), we need to only consider the \emph{normalized} direction $w\in\triangle_d$ to determine the set containment.

\begin{lemma}
\label{lem:containment}
Let $B\subseteq\R^d_+$ be nonempty, compact, convex, and downward closed in $\R^d_+$, and let $U\subseteq\R^d_+$ be nonempty and compact. Then $U\subseteq B$ holds if and only if $h_U\prn{w}\le h_B\prn{w}$ holds for all $w\in\triangle_d$.
\end{lemma}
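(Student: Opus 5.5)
The forward direction is immediate: if $U\subseteq B$, then for every $w$ we have $h_U(w)=\max_{x\in U}\inpr{w,x}\le\max_{x\in B}\inpr{w,x}=h_B(w)$, in particular for $w\in\triangle_d$. For the converse I will argue by contraposition. Suppose some $x\in U$ has $x\notin B$; I will produce $w\in\triangle_d$ with $h_U(w)>h_B(w)$. Since $h_U(w)\ge\inpr{w,x}$, it suffices to find $w\in\triangle_d$ with $\inpr{w,x}>h_B(w)$.

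The first step is strict separation. $B$ is nonempty, compact and convex, and $x\notin B$, so there is $v\in\R^d$ with $\inpr{v,x}>h_B(v)$. The second step replaces $v$ by its positive part $v_+\coloneqq\pos{v}$ (coordinatewise), using downward closedness. Let $b^\star\in B$ attain $h_B(v_+)$. Define $b'$ by $b'_i=b^\star_i$ if $v_i>0$ and $b'_i=0$ otherwise. Then $0\le b'\le b^\star$, so $b'\in B$, and
\[
h_B(v_+)=\inpr{v_+,b^\star}=\inpr{v,b'}\le h_B(v).
\]
Since $x\in\R^d_+$, we also have $\inpr{v_+,x}\ge\inpr{v,x}$. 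Hence
\[
\inpr{v_+,x}\ge\inpr{v,x}>h_B(v)\ge h_B(v_+).
\]
The third step normalizes. We have $v_+\neq0$, since otherwise both sides would vanish and the strict inequality would fail. Because $h_B$ is positively homogeneous, $w\coloneqq v_+/\norm{v_+}_1\in\triangle_d$ still satisfies $\inpr{w,x}>h_B(w)$. This finishes the converse.

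The only delicate point is the second step, where the separating direction must be made nonnegative. The construction of $b'$ is exactly where downward closedness of $B$ in $\R^d_+$ enters, and nonnegativity of $x$ (from $U\subseteq\R^d_+$) is needed for $\inpr{v_+,x}\ge\inpr{v,x}$. Convexity of $U$ is not needed at all.
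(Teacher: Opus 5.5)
Your proof is correct, but it takes a different route from the paper. The paper works directly with the Euclidean projection. For $x\in U\setminus B$ it takes the nearest point $q\in B$ and uses downward closedness to show $q\le x$: lowering any coordinate of $q$ above $x_i$ stays in $B$ and strictly shortens the distance. The projection inequality then makes the nonnegative residual $x-q$ itself a separating normal, so $w=(x-q)/\norm{x-q}_1$ works.

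You instead start from an arbitrary strictly separating direction $v$ and rectify it to $v_+$. Downward closedness enters only through $h_B(v_+)\le h_B(v)$. Since $B\subseteq\R^d_+$ also gives $h_B(v)\le h_B(v_+)$, this is in fact the identity $h_B(v)=h_B(v_+)$. Nonnegativity of $x$ supplies $\inpr{v_+,x}\ge\inpr{v,x}$.

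Your version is more modular. It shows that, for sets of this type, the classical criterion $h_U\le h_B$ on all of $\R^d$ reduces automatically to $\R^d_+$ and then, by homogeneity, to $\triangle_d$, with any separating hyperplane as input. It also isolates exactly where each hypothesis is used. The paper's version gives an explicit, canonical witness, the normalized projection residual. That witness is precisely the query direction of the $L^2$ rule, and the same projection argument is reused in the proof of \cref{prop:l2}.
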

\begin{proof}
The forward implication follows from the definition of the support function. For the reverse implication, let $x\in U\setminus B$ and let $q$ be its Euclidean nearest point in $B$. We have $q\le x$: lowering any coordinate of $q$ that exceeds the corresponding coordinate of $x$ preserves membership in $B$ and strictly decreases the distance. The projection inequality gives $\inpr{x-q,b-q}\le0$ for all $b\in B$. Thus $x-q$ is nonzero and nonnegative, and $w\coloneqq\prn{x-q}/\norm{x-q}_1$ satisfies $h_B\prn{w}=\inpr{w,q}<\inpr{w,x}\le h_U\prn{w}$,
where the strict inequality follows by $x\ne q$.
This is a contradiction.
\end{proof}

By \cref{lem:containment}, a positive \emph{support gap} $h_U\prn{w}-h_S\prn{w}$ identifies a direction $w\in\triangle_d$ in which the outer approximation $U$ violates the safe region $S$.
Subsequently, we introduce the normalized support gap to obtain a scale-invariant score for comparing violations across directions in a chosen geometry.

\subsection{Direction Score}
\label{sec:scores-def}

To take the underlying geometry into account, we introduce an
\emph{error unit} $E\subseteq\R_+^d$, which is a nonempty compact
convex downward-closed set and satisfies
$h_E>0$ over $\triangle_d$.
For $\epsilon\ge0$, we enlarge $S$ to $S+\epsilon E$ by allowing additive error vectors from the scaled error unit $\epsilon E$.
The enlarged set $S+\epsilon E$ remains nonempty, compact, convex, and downward closed, which matters on applying \cref{lem:containment}.
By additivity and homogeneity of support functions, we have
$h_{S+\epsilon E}=h_S+\epsilon h_E$.
Users are allowed to choose any $E$ suitable for their purposes.
We study the following three choices:
\begin{equation}
E_2\coloneqq\Set{z\!\in\!\R^d_+\!}{\!\norm{z}_2\!\le\!1},\;\; E_\infty\coloneqq\brc{0,1}^d,\;\; E=S.
\label{eq:three-units}
\end{equation}
The first two measure errors with respect to the Euclidean and $L^\infty$ geometries, respectively.
The third choice respects the geometry of the safe region $S$ itself, leading to the proportional enlargement $S+\epsilon S=(1+\epsilon)S$ thanks to the convexity of $S$.
The last one requires the safe region $S$ to satisfy $h_S>0$.
\cref{tab:units} summarizes the three choices of the error unit.

\begin{table}[t]
\caption{Error units and normalizers for $w\in\triangle_d$.
The gauge choice ($E=S$) requires $h_S>0$ on this simplex.}
\label{tab:units}
\begin{center}
\begin{tabular}{@{}lll@{}}
\toprule
Rule & Error unit $E$ & $h_E\prn{w}$\\
\midrule
$L^2$ & $E_2$ in \eqref{eq:three-units} & $\norm{w}_2$\\
$L^\infty$ & $E_\infty=\brc{0,1}^d$ & $\norm{w}_1=1$\\
gauge & $S$ & $h_S\prn{w}$\\
\bottomrule
\end{tabular}
\end{center}
\end{table}

Now, we introduce the following \emph{direction score}, which is the normalized support gap, and its \emph{excess}:
\begin{equation}
\begin{aligned}
\textbf{Direction score} \quad &
v_E\prn{w}\coloneqq\frac{h_U\prn{w}-h_S\prn{w}}{h_E\prn{w}}, \\
\textbf{Excess} \quad & V_E\prn{U,S}\coloneqq\pos{\max_{w\in\triangle_d}v_E\prn{w}}.
\end{aligned}
\label{eq:score}
\end{equation}
Due to the homogeneity of support functions, the scale of $w$ no longer affects the direction score $v_E(w)$.
While the positive $v_E(w)$ means the violation $U \nsubseteq S$ in this direction $w$, the excess measures the minimally necessary enlargement $\epsilon E$ to uniformly ensure $U\subseteq S+\epsilon E$.

\begin{theorem}
\label{thm:containment}
Under \cref{ass:sets}, let $E\subseteq\R_+^d$ be a given error unit that is nonempty, compact, convex, downward closed, and satisfies $h_E>0$ over $\triangle_d$.
For every $\epsilon\ge0$, we have $U\subseteq S+\epsilon E$ if and only if \mbox{$v_E\prn{w}\le\epsilon$} holds for all $w\in\triangle_d$. Consequently, we have
\begin{equation}
V_E\prn{U,S}=\min\Set{\epsilon\ge0}{U\subseteq S+\epsilon E}.
\label{eq:containment}
\end{equation}
\end{theorem}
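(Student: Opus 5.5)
The plan is to reduce the first claim to \cref{lem:containment} applied with $B\coloneqq S+\epsilon E$, and then read off \eqref{eq:containment} from the resulting characterization together with a compactness argument on $\triangle_d$.

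\textbf{First claim.} Fix $\epsilon\ge0$ and set $B\coloneqq S+\epsilon E$. We verify that $B$ meets the hypotheses of \cref{lem:containment}.
\begin{itemize}
\item $B\subseteq\R_+^d$, since both $S$ and $E$ lie in $\R_+^d$.
\item $B$ is nonempty, compact and convex, as the Minkowski sum of two such sets (compactness because addition is continuous on $S\times E$).
\item $B$ is downward closed. Take $0\le y\le s+\epsilon e$ with $s\in S$, $e\in E$. Define $s'\coloneqq\min\set{y,s}$ coordinatewise; then $s'\in S$ by downward closedness of $S$. The remainder $y-s'$ is nonnegative and, coordinatewise, at most $\epsilon e$. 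If $\epsilon>0$, downward closedness of $E$ gives $y-s'\in\epsilon E$. If $\epsilon=0$, then $y\le s$, so $y=s'$ and $y\in S$.
\end{itemize}
The set $U$ is nonempty, compact and contained in $\R_+^d$ by \cref{ass:sets}. \cref{lem:containment} then gives that $U\subseteq B$ if and only if $h_U\le h_B$ on $\triangle_d$. Since $h_B=h_S+\epsilon h_E$ and $h_E>0$ on $\triangle_d$, dividing by $h_E(w)$ turns this into $v_E(w)\le\epsilon$ for all $w\in\triangle_d$.

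\textbf{Consequence \eqref{eq:containment}.} The function $v_E$ is continuous on the compact set $\triangle_d$: support functions of compact sets are finite and convex, hence continuous, and $h_E$ is bounded away from zero there. So the maximum $m\coloneqq\max_{w\in\triangle_d}v_E(w)$ is attained.

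By the first part, the feasible set $\Set{\epsilon\ge0}{U\subseteq S+\epsilon E}$ equals $\Set{\epsilon\ge0}{\epsilon\ge m}=[\max\set{m,0},\infty)$. Its minimum exists and equals $\pos{m}=V_E(U,S)$.

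\textbf{Main obstacle.} The only non-formal step is checking downward closedness of $S+\epsilon E$, which is needed to invoke \cref{lem:containment}. The coordinatewise-minimum splitting above handles it, with the case $\epsilon=0$ treated separately. Everything else is bookkeeping with homogeneity and additivity of support functions.
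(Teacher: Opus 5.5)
Your proof is correct and takes the same route as the paper. The paper applies \cref{lem:containment} to $B=S+\epsilon E$, uses $h_B=h_S+\epsilon h_E$, divides by $h_E>0$, and minimizes over $\epsilon$. You also fill in two points the paper only asserts: the downward closedness of $S+\epsilon E$, which your coordinatewise-minimum splitting settles, and the attainment of $\max_{w\in\triangle_d}v_E(w)$, which follows from continuity on the compact simplex. Both are handled correctly.
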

\begin{proof}
Apply \cref{lem:containment} to $B=S+\epsilon E$. By $h_B=h_S+\epsilon h_E$, the containment $U\subseteq S+\epsilon E$ is equivalent to $h_U\le h_S+\epsilon h_E$ over $\triangle_d$. Dividing by $h_E(w)$ gives $v_E\prn{w}\le\epsilon$. Minimizing over $\epsilon\ge0$ gives \eqref{eq:containment}.
\end{proof}

The largest direction score thus selects the direction in which the outer approximation $U$ violates the safe region $S$ most relative to the error unit $E$.
The excess is zero exactly when the safety condition $U\subseteq S$ holds.

\begin{remark}
\label{rem:infconv}
From the convex-analytic viewpoint, the excess generalizes the directed Hausdorff distance from $U$ to $S$.
To see this, we introduce a distance function
\[
  d_E(x) \coloneqq (\chi_S \infconv \gamma_E)(x) = \min_{q \in S}\gamma_E(x-q),
\]
which is the \emph{infimal convolution} between the indicator function of $S$, $\chi_S$, and the gauge function of $E$.
Then $S+\epsilon E=\Set{x}{d_E\prn{x}\le\epsilon}$ and $V_E\prn{U,S}=\max_{x\in U}d_E\prn{x}$ (\cref{prop:infconv} in \cref{app:proofs}).
As seen later, $d_{E_2}$ and $d_{E_\infty}$ correspond to the $L^2$ and $L^\infty$ point-to-set distances, respectively.
Hence $V_E$ is exactly the directed Hausdorff distance when $E=E_2$.%
\footnote{
  A similar notion to the direction score (called the quality indicator) is also used to measure the approximation quality of sandwich algorithms.
  The quality indicator is equivalent to the directed Hausdorff distance but only up to multiplicative factors~\citep{kamenev1994analysis,lammel2023convergence},
  which is weaker than the exact correspondence of $V_{E_p}$ and the directed Hausdorff distance.
}

This idea of the set distance by an error unit dates back to \citet{Hormander1955}.
For safety verification, we
(i) focus on downward-closed sets and
(ii) allow asymmetric error units $E$.
These points confine queries to $\triangle_d$ (\cref{lem:containment}), enabling query direction synthesis.
\end{remark}

\begin{figure*}[t]
\centering
\begingroup
\definecolor{exteal}{HTML}{008985}
\definecolor{exviolet}{HTML}{7771CE}
\begin{tikzpicture}[
  x=3.4cm, y=3.4cm,
  font=\small,
  line cap=round, line join=round,
  axis/.style={draw=black!45, line width=0.4pt,
    -{Latex[length=1.5mm,width=1mm]}},
  enlargement/.style={draw=exviolet, line width=0.9pt},
  outer/.style={draw=black!65, fill=black!6, line width=0.7pt},
  safe/.style={draw=black!85, fill=black!30, line width=0.9pt},
  direction/.style={draw=exteal, line width=1.4pt,
    -{Latex[length=2.3mm,width=1.55mm]}},
  residual/.style={draw=black!45, line width=0.65pt,
    dash pattern=on 2.2pt off 2pt},
  point/.style={circle, fill=exteal, inner sep=1.55pt},
  every node/.style={inner sep=1.5pt}
]
\pgfmathsetmacro{\exradius}{sqrt(5)/4}
\pgfmathsetmacro{\exdiag}{sqrt(5/32)}

\foreach \panel/\offset/\heading in {
  1/0/{$L^2$},
  2/1.33/{$L^\infty$},
  3/2.66/{Gauge}
}{
  \begin{scope}[shift={(\offset,0)}]
    \ifnum\panel=1
      \path[fill=exviolet!10]
        (0,0) -- ({0.25+\exradius},0)
        arc[start angle=0,end angle=45,radius=\exradius]
        -- (\exdiag,{0.25+\exdiag})
        arc[start angle=45,end angle=90,radius=\exradius]
        -- cycle;
    \else\ifnum\panel=2
      \path[fill=exviolet!10]
        (0,0) -- (0.75,0) -- (0.75,0.5)
        -- (0.5,0.75) -- (0,0.75) -- cycle;
    \else
      \path[fill=exviolet!10]
        (0,0) -- (1,0) -- (0,1) -- cycle;
    \fi\fi

    \path[outer] (0,0) rectangle (0.75,0.25);
    \path[safe] (0,0) -- (0.25,0) -- (0,0.25) -- cycle;

    \ifnum\panel=1
      \draw[enlargement] ({0.25+\exradius},0)
        arc[start angle=0,end angle=45,radius=\exradius]
        -- (\exdiag,{0.25+\exdiag})
        arc[start angle=45,end angle=90,radius=\exradius];
    \else\ifnum\panel=2
      \draw[enlargement] (0.75,0) -- (0.75,0.5)
        -- (0.5,0.75) -- (0,0.75);
    \else
      \draw[enlargement] (1,0) -- (0,1);
    \fi\fi

    \draw[axis] (0,0) -- (1.07,0) node[right] {$x_1$};
    \draw[axis] (0,0) -- (0,1.08) node[above] {$x_2$};
    \node[below left, text=black!65] at (0,0) {$0$};
    \draw[black!45,line width=0.4pt] (1,0) -- (1,-0.015);
    \node[below, text=black!65] at (1,-0.018) {$1$};
    \draw[black!45,line width=0.4pt] (0,1) -- (-0.015,1);
    \node[left, text=black!65] at (-0.018,1) {$1$};

    \node at (0.073,0.074) {$S$};
    \node[text=black!70] at (0.48,0.18) {$U$};

    \ifnum\panel<3
      \draw[residual] (0.25,0) -- (0.75,0.25);
      \node[point] at (0.25,0) {};
      \node[below, text=exteal] at (0.25,-0.018) {$q_*$};
    \fi

    \ifnum\panel=1
      \draw[direction] (0.75,0.25)
        -- ({0.75+0.26*2/sqrt(5)},{0.25+0.26/sqrt(5)})
        node[above right, inner sep=0.3pt, xshift=-1pt, yshift=-1pt, text=exteal] {$w_*$};
    \else\ifnum\panel=2
      \draw[direction] (0.75,0.25) -- (1.01,0.25)
        node[right, inner sep=0.3pt, text=exteal] {$w_*$};
    \else
      \draw[direction] (0.75,0.25)
        -- ({0.75+0.26/sqrt(2)},{0.25+0.26/sqrt(2)})
        node[above right, inner sep=0.3pt, xshift=-1pt, yshift=-1pt, text=exteal] {$w_*$};
    \fi\fi
    \node[point] at (0.75,0.25) {};
    \node[above left, text=exteal] at (0.695,0.26) {$x_*$};
    \node[font=\normalsize] at (0.5,-0.18) {\heading};
  \end{scope}
}
\end{tikzpicture}
\caption{Different error units select different query directions for the
same safe region $S$ (\textcolor{black!85}{\textbf{dark gray}}) and outer approximation $U$
(\textcolor{black!65}{\textbf{light gray}}).
The \textcolor{exviolet}{\textbf{violet}} regions show the smallest enlargements $S+\epsilon E$
containing $U$: $\epsilon=\sqrt5/4$ for $E=E_2$, $1/2$ for
$E=E_\infty$, and $3$ for $E=S$.
At $x_*=(3/4,1/4)$, the \textcolor{exteal}{\textbf{teal}} arrows are outward normals to these
enlargements, giving the normalized weights $w_*=(2/3,1/3)$, $(1,0)$, and
$(1/2,1/2)$.
The dashed segments join $x_*$ to the common nearest point
$q_*=(1/4,0)$ for the $L^2$ and $L^\infty$ cases.}
\label{fig:example-query-directions}
\endgroup
\end{figure*}

\subsection{Query Computation}
\label{sec:three}

To implement the refinement step~\eqref{eq:update}, we use the maximizer of the direction score $v_E$ as a query.
Since the direction score $v_E$ is not concave, its maximization is generally nontrivial.
For each geometry in \cref{tab:units}, we instead compute the query in the following three steps:
\begin{enumerate}
  \item Compute a ``farthest'' point $x_*\in U$ from $S$;
  \item ``Project'' $x_*$ onto $S$, denoted by $q_*\in S$;
  \item Query a supporting normal $w_*$ at $q_*$.
\end{enumerate}
The nonconvexity remains only in Step~1: in all cases below, the ``distance'' to $S$ is convex in $x$, so its maximization over $U$ is intractable in general.
We typically consider polyhedral $U$ generated by the refinement step~\eqref{eq:update}; then the maximum is attained at a vertex, so enumerating the vertices of $U$ suffices to find $x_*$.

For $p\in\set{2,\infty}$ and $x\in\R^d_+$, let $\dist_p\prn{x,S}\coloneqq\min_{s\in S}\norm{x-s}_p$, and write $\proj_S^p\prn{x}$ for a chosen minimizer of this distance.
All proofs of the formal claims in this subsection are deferred to \cref{app:proofs}.

\subsubsection{\texorpdfstring{$L^2$}{L2} Score}
For $\epsilon\ge0$, $S+\epsilon E_2$ is the closed Euclidean
$\epsilon$-neighborhood of $S$ within $\R^d_+$.
A farthest point outside $S$ has a unique Euclidean projection
thanks to the convexity of $S$, and the normalized projection
residual maximizes $v_{E_2}$.
\begin{proposition}
\label{prop:l2}
Under \cref{ass:sets}, we have $V_{E_2}\prn{U,S}=\max_{x\in U}\dist_2\prn{x,S}$. If this value is positive, let $x_*$ be a (non-unique) maximizer of the distance and let $q_*\coloneqq\proj_S^2\prn{x_*}$. Then
\begin{equation}
w_*\coloneqq\frac{x_*-q_*}{\norm{x_*-q_*}_1}
\label{eq:l2-direction}
\end{equation}
belongs to $\triangle_d$ and maximizes $v_{E_2}$.
\end{proposition}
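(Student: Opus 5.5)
The plan is to use \cref{thm:containment} with $E=E_2$, then identify $S+\epsilon E_2$ with a Euclidean neighborhood of $S$, and finally check that $w_*$ attains the bound.

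\textbf{Identifying the enlargement.} I claim that $U\subseteq S+\epsilon E_2$ holds if and only if $\dist_2(x,S)\le\epsilon$ for all $x\in U$. Then \cref{thm:containment} gives $V_{E_2}(U,S)=\max_{x\in U}\dist_2(x,S)$.

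For the forward direction, take $x=s+\epsilon z$ with $s\in S$ and $z\in E_2$. Then $\dist_2(x,S)\le\norm{\epsilon z}_2\le\epsilon$.

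For the reverse direction, fix $x\in U\subseteq\R^d_+$ and let $q=\proj_S^2(x)$. Repeat the argument from the proof of \cref{lem:containment}: lowering any coordinate of $q$ that exceeds $x_i$ keeps the point in $S$, since $S$ is downward closed, and strictly decreases the distance. Hence $q\le x$. So $x-q\in\R^d_+$ with $\norm{x-q}_2\le\epsilon$. If $\epsilon>0$, this gives $x=q+\epsilon\cdot(x-q)/\epsilon\in S+\epsilon E_2$. If $\epsilon=0$, then $x=q\in S$.

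The maximum of $\dist_2(\cdot,S)$ over $U$ is attained because the distance is continuous and $U$ is compact.

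\textbf{Checking that $w_*$ is optimal.} Suppose the value $\delta\coloneqq\norm{x_*-q_*}_2$ is positive. As above, $q_*\le x_*$, so $x_*-q_*$ is nonzero and nonnegative, and $w_*\in\triangle_d$. By homogeneity of $v_{E_2}$, it suffices to evaluate at $n\coloneqq x_*-q_*$.

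The projection inequality $\inpr{n,b-q_*}\le0$ for $b\in S$ gives $h_S(n)=\inpr{n,q_*}$. Since $x_*\in U$, we also have $h_U(n)\ge\inpr{n,x_*}$. Therefore
\[
v_{E_2}(n)\ge\frac{\inpr{n,x_*-q_*}}{\norm{n}_2}=\norm{n}_2=\delta .
\]
For this I use $h_{E_2}(n)=\norm{n}_2$, which holds for nonnegative $n$ because the maximizer $n/\norm{n}_2$ lies in $E_2$. This is the Table entry, and it is immediate by Cauchy--Schwarz.

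On the other hand, \cref{thm:containment} shows that $v_{E_2}(w)\le V_{E_2}(U,S)=\delta$ for all $w\in\triangle_d$. Hence $v_{E_2}(w_*)=\delta$, and $w_*$ is a maximizer.

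The main subtlety is the reverse inclusion in the first step. The error unit $E_2$ is only the nonnegative part of the ball, so we need the residual $x-q$ to be nonnegative. This is exactly where downward closedness of $S$ and the nearest-point argument of \cref{lem:containment} are used.
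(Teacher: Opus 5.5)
Your proof is correct and follows essentially the same route as the paper. The ingredients are the same: $q_*\le x_*$ from downward closedness, the projection inequality giving $h_S(n)=\inpr{n,q_*}$, and $h_{E_2}(n)=\norm{n}_2$ for $n\ge0$. They feed the same sandwich $\delta\le v_{E_2}(w_*)\le V_{E_2}(U,S)=\delta$. The only difference is packaging. You identify $S+\epsilon E_2$ at the set level and apply \cref{thm:containment} directly, and you reprove the certificate argument inline. The paper instead passes through the pointwise identity $d_{E_2}=\dist_2(\cdot,S)$ (\cref{prop:infconv}, \cref{lem:norm-enlargement}) and invokes the generic \cref{lem:certificate}.
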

Finding $x_*$ by vertex enumeration of $U$ recovers the refinement step of \citet[Algorithm~2]{watanabe2026posterior}.
After getting a farthest point $x_*$, the query $w_*$ in
\eqref{eq:l2-direction} is computed by normalizing the projection residual.

\subsubsection{\texorpdfstring{$L^\infty$}{L infty} Score}
The set $S+\epsilon\brc{0,1}^d$ allows an increase of at most $\epsilon$ in each coordinate of a safe vector.
With this choice $E_\infty=\brc{0,1}^d$, we have $h_E(w)=\norm{w}_1 = 1$, which implies $v_{E_\infty}(w) = h_U(w) - h_S(w)$ for all $w\in\triangle_d$.
Thus, the $L^\infty$ unit measures the set containment by the unnormalized support gap, which is the absolute error of the support functions.
Below, we characterize maximizers of $v_{E_\infty}$.
\begin{proposition}
\label{prop:linf}
Under \cref{ass:sets}, we have $V_{E_\infty}\prn{U,S}=\max_{x\in U}\dist_\infty\prn{x,S}$. If this value is positive, let $x_*$ be a (non-unique) distance maximizer, $\delta\coloneqq\dist_\infty\prn{x_*,S}$, and $q_*\coloneqq\pos{x_* - \delta\mathbf1}$, where $\mathbf1$ is the all-ones vector. Then $q_*$ is an $L^\infty$-nearest point in $S$ to $x_*$. 
There exists $w_*\in\triangle_d$ satisfying
\begin{equation}
h_S\prn{w_*}=\inpr{w_*,q_*},\qquad
\inpr{w_*,x_*-q_*}=\delta,
\label{eq:linf-conditions}
\end{equation}
and every such $w_*$ maximizes $v_{E_\infty}$.
\end{proposition}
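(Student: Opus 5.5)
The plan has three parts: an identity for the excess, the nearest-point claim for $q_*$, and the construction of $w_*$ with its optimality.

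\emph{The identity $V_{E_\infty}(U,S)=\max_{x\in U}\dist_\infty(x,S)$.} I will derive it from \cref{thm:containment}. Since $E_\infty=[0,1]^d$ and every set involved is downward closed, I first show that for $x\in\R^d_+$,
\[
x\in S+\epsilon E_\infty \iff \dist_\infty(x,S)\le\epsilon .
\]
For ($\Rightarrow$), write $x=s+e$ with $e\in\epsilon E_\infty$; then $\norm{x-s}_\infty\le\epsilon$. For ($\Leftarrow$), take $s\in S$ with $\norm{x-s}_\infty\le\epsilon$. Set $s'\coloneqq\min(s,x)$ coordinatewise. Then $s'\in S$ by downward closedness, and $x-s'\in[0,\epsilon]^d=\epsilon E_\infty$. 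Hence $U\subseteq S+\epsilon E_\infty$ if and only if $\max_{x\in U}\dist_\infty(x,S)\le\epsilon$. Combined with \eqref{eq:containment}, this gives the identity; the maximum exists by compactness and continuity of the distance.

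\emph{$q_*$ is an $L^\infty$-nearest point.} Let $\delta=\dist_\infty(x_*,S)>0$ and pick a nearest $s$. By the truncation argument above we may assume $s\le x_*$, so $s\ge x_*-\delta\mathbf1$. Together with $s\ge0$ this gives $s\ge q_*=\pos{x_*-\delta\mathbf1}$. By downward closedness $q_*\in S$. Moreover $x_*-q_*=\min(\delta\mathbf1,x_*)$, which has $L^\infty$ norm $\delta$ because $\delta\le\norm{x_*}_\infty$ (note $0\in S$). So $q_*$ is nearest.

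\emph{Existence of $w_*$.} This is the main step. I will apply separation to the convex sets $S$ and the open box
\[
C\coloneqq\Set{y}{\norm{y-x_*}_\infty<\delta}.
\]
These sets are disjoint by the definition of $\delta$. So there is $w\neq0$ with $\inpr{w,s}\le\inpr{w,y}$ for all $s\in S$ and $y\in C$, i.e.
\[
h_S(w)\le\inf_{y\in C}\inpr{w,y}=\inpr{w,x_*}-\delta\norm{w}_1 .
\]
I next need $w\ge0$. Downward closedness of $S$ should allow this: replacing $w$ by $w_+$ keeps $h_S(w_+)$ controlled. The cleaner route is to use $S_-\coloneqq S+(-\R^d_+)$, which is still convex and disjoint from $C$. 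The disjointness holds because $C$'s relevant points lie near $x_*\ge0$ and truncation maps into $S$. Separating $S_-$ from $C$ forces $w\ge0$, since $h_{S_-}(w)<\infty$ requires it. Normalize so that $w_*\in\triangle_d$. Then
\[
h_S(w_*)+\delta\le\inpr{w_*,x_*} .
\]
Since $q_*\in S$, we have $\inpr{w_*,q_*}\le h_S(w_*)$. Also $\inpr{w_*,x_*-q_*}\le\norm{w_*}_1\norm{x_*-q_*}_\infty=\delta$. These chain to
\[
\inpr{w_*,x_*}\le h_S(w_*)+\delta\le\inpr{w_*,x_*},
\]
so all inequalities are equalities, which is exactly \eqref{eq:linf-conditions}.

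\emph{Every such $w_*$ maximizes $v_{E_\infty}$.} Any $w_*$ satisfying \eqref{eq:linf-conditions} gives
\[
v_{E_\infty}(w_*)\ge\inpr{w_*,x_*}-h_S(w_*)=\inpr{w_*,x_*-q_*}=\delta .
\]
On the other hand, $\delta=V_{E_\infty}(U,S)\ge\max_{w\in\triangle_d}v_{E_\infty}(w)$ by the first part. Hence $w_*$ is a maximizer.
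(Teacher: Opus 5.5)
Your proof is correct, but it reaches the existence of $w_*$ by a different route from the paper's.

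\textbf{The paper's route.} For a general error unit $E$, the paper first proves the dual representation \eqref{eq:infconv-dual} of $d_E=\chi_S\infconv\gamma_E$ (\cref{prop:infconv}). That result itself rests on the Euclidean-projection argument of \cref{lem:containment}. \cref{lem:norm-enlargement} then identifies $d_{E_\infty}$ with $\dist_\infty(\cdot,S)$. Since $h_{E_\infty}\equiv1$ on $\triangle_d$, the paper takes $w_*$ to be a maximizer of $w\mapsto\inpr{w,x_*}-h_S(w)$ over the compact simplex. It then runs the same H\"older chain you use and delegates the converse to \cref{lem:certificate}.

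\textbf{Your route.} You characterize $S+\epsilon E_\infty$ directly by truncation and feed this into \cref{thm:containment}. You obtain $w_*$ by separating $S-\R^d_+$ from the open $L^\infty$ box of radius $\delta$ around $x_*$. Nonnegativity of $w$ then comes for free from finiteness of $h_{S-\R^d_+}$, and your last step is \cref{lem:certificate} written out inline.

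\textbf{Comparison.} Your argument is self-contained and geometrically transparent: the query is the normal of a hyperplane touching $S$ at $q_*$ and bounding the largest box around $x_*$ that misses $S$. The paper's argument is modular. The separation is done once, for all error units, so the $L^2$, $L^\infty$, and gauge rules all follow from the same two lemmas.

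\textbf{One step to tighten.} Your justification that $S-\R^d_+$ and $C$ are disjoint is only gestured at. It should read as follows. If $y\le s$ for some $s\in S$ and $\norm{y-x_*}_\infty<\delta$, then $0\le\pos{y}\le s$, so $\pos{y}\in S$ by downward closedness. Since $x_*\ge0$, we also have $\norm{\pos{y}-x_*}_\infty\le\norm{y-x_*}_\infty<\delta$, which contradicts $\delta=\dist_\infty(x_*,S)$.
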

Unlike the $L^2$ case, \cref{prop:linf} gives the existence of $w_*$ only.
Once $x_*$ is obtained by vertex enumeration, $q_*$ follows immediately.
It remains to construct $w_*$.
Let
\[
I\coloneqq\Set{i\in[d]}{x_{*,i}-q_{*,i}=\delta}
\]
be the saturated coordinates,
and let $\mathbf1_I$ be the indicator vector of $I$.
The first condition in \eqref{eq:linf-conditions}
says that $w_*$ is an outward normal of $S$ at $q_*$,
and the second says that $w_*$ puts weight only on $I$
(\cref{lem:linf-saturated}).
For $i\notin I$, we have $q_{*,i}=0$,
so the nonnegativity constraint $x_i\ge0$ is active
and can cancel the weight on such coordinates.
Hence, a nonnegative outward normal can be \emph{masked} by $\mathbf1_I$,
i.e., its coordinates outside $I$ can be set to zero, while it remains an outward normal.
When $S$ is given by a convex function,
this yields the following closed-form $w_*$.\looseness=-1

\begin{proposition}
\label{prop:linf-smooth}
In the setting of \cref{prop:linf},
let $S=\Set{x\in\triangle_d^\le}{f(x)\le\theta}$
for a differentiable convex $f$ that is
nondecreasing in each coordinate
and a threshold $\theta\in\R$ with $f(0)<\theta$.
Then $\nabla f(q_*)\odot\mathbf1_I\neq0$, and
$
  w_*\coloneqq
  {\nabla f(q_*)\odot\mathbf1_I}/{\norm{\nabla f(q_*)\odot\mathbf1_I}_1}
$
satisfies the certificate~\eqref{eq:linf-conditions},
where $\odot$ denotes the coordinatewise product.
\end{proposition}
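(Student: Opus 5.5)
We use the setting of \cref{prop:linf}: $x_*$ maximizes $\dist_\infty(\cdot,S)$ over $U$, $\delta>0$, $q_*=\pos{x_*-\delta\mathbf1}\in S$, and $I$ is the saturated set. The plan is to show three things. First, $q_*$ lies on the boundary $f(q_*)=\theta$. Second, $g\coloneqq\nabla f(q_*)$ is nonnegative and its support meets $I$. Third, after masking and normalizing, $g$ satisfies both identities in \eqref{eq:linf-conditions}.

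\textbf{Step 1 (facts about $f$ and $q_*$).} Since $f$ is nondecreasing in each coordinate and differentiable, $g\ge0$. Coordinates $i\notin I$ have $x_{*,i}<\delta$, so $q_{*,i}=0$. Coordinates $i\in I$ have $q_{*,i}=x_{*,i}-\delta$.

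We also need $f(q_*)=\theta$, and we argue this by contradiction. Suppose $f(q_*)<\theta$. By continuity, for small $\eta>0$ the point $q'\coloneqq\pos{x_*-(\delta-\eta)\mathbf1}$ satisfies $f(q')<\theta$. Moreover $q'\in\triangle_d^{\le}$, because $q'\le x_*\in U\subseteq\triangle_d^{\le}$. Hence $q'\in S$ with $\norm{x_*-q'}_\infty\le\delta-\eta$, contradicting $\delta=\dist_\infty(x_*,S)$. The subtlety is that $S$ is also cut by the simplex constraints; this is handled exactly by $q'\le x_*$.

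\textbf{Step 2 ($g\odot\mathbf1_I\ne0$).} This is the main obstacle. Suppose instead $g_i=0$ for all $i\in I$. For small $t>0$ set $q_t\coloneqq q_*+t\mathbf1_I$. Then $q_t\le x_*$, so $q_t\in\triangle_d^{\le}$, and $\norm{x_*-q_t}_\infty<\delta$, because coordinates outside $I$ already have gap strictly below $\delta$. Convexity only gives the lower bound $f(q_t)\ge f(q_*)+t\inpr{g,\mathbf1_I}=\theta$, which does not show $f(q_t)\le\theta$. So we use convexity along the segment from $0$ instead.

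Let $z\coloneqq q_*\odot\mathbf1_I=q_*$; this holds because $q_*$ vanishes outside $I$. The gradient inequality at $q_*$ evaluated at $0$ gives
\[
f(0)\ge f(q_*)-\inpr{g,q_*}=\theta-\sum_{i\in I}g_iq_{*,i}.
\]
If $g_I=0$, this yields $f(0)\ge\theta$, contradicting $f(0)<\theta$. Hence $g\odot\mathbf1_I\ne0$, and the claimed $w_*$ is a well-defined element of $\triangle_d$.

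\textbf{Step 3 (the certificate).} The second identity holds because $w_*$ is supported on $I$ and $x_{*,i}-q_{*,i}=\delta$ there. Thus $\inpr{w_*,x_*-q_*}=\delta\norm{w_*}_1=\delta$.

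For the first identity, take any $s\in S$. Since $f(s)\le\theta=f(q_*)$, convexity gives $\inpr{g,s-q_*}\le0$. Because $s\ge0$, $g\ge0$, and $q_{*,i}=0$ for $i\notin I$, we get
\[
\inpr{g\odot\mathbf1_I,s-q_*}=\inpr{g,s-q_*}-\sum_{i\notin I}g_is_i\le0.
\]
Dividing by the norm gives $\inpr{w_*,s}\le\inpr{w_*,q_*}$, and since $q_*\in S$ we conclude $h_S(w_*)=\inpr{w_*,q_*}$.

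Finally, \cref{prop:linf} shows that any $w_*$ satisfying \eqref{eq:linf-conditions} maximizes $v_{E_\infty}$, so this $w_*$ is a maximizer.
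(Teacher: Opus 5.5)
Your proof is correct, and it takes a more elementary route than the paper. The paper starts from the certificate direction $w$ whose \emph{existence} is guaranteed by \cref{prop:linf}. It uses Slater's condition to obtain the KKT description of $N_S(q_*)$, and shows the simplex multiplier vanishes because $\inpr{\mathbf1,q_*}<1$. It then writes $w=\lambda\nabla f(q_*)-\sum_{i\in Z}\mu_ie_i$. From $1=\sum_{i\in I}w_i\le\lambda\sum_{i\in I}\nabla_if(q_*)$ it reads off both $\nabla f(q_*)\odot\mathbf1_I\neq0$ and, via $\lambda>0$, $f(q_*)=\theta$. Masking is justified because every $i\notin I$ is an active nonnegativity constraint.

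You avoid all of this machinery:
\begin{itemize}
\item You get $f(q_*)=\theta$ directly from the minimality of $\delta$. Shrinking $\delta$ keeps $\pos{x_*-(\delta-\eta)\mathbf1}$ below $x_*$, hence inside $\triangle_d^{\le}$.
\item You get non-vanishing of the masked gradient from the gradient inequality between $q_*$ and $0$, combined with $f(0)<\theta$ and the fact that $q_*$ vanishes off $I$. This even yields the quantitative bound $\sum_{i\in I}\nabla_if(q_*)\,q_{*,i}\ge\theta-f(0)>0$.
\item You verify the normal-cone condition on the larger set $\Set{s\ge0}{f(s)\le\theta}$, so the simplex constraint never enters Step~3.
\end{itemize}
As a result you need neither Slater/KKT nor the existence half of \cref{prop:linf}; that proposition is used only for the final maximality remark. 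What the paper's route buys is structure: it identifies the certificate directions as exactly $N_S(q_*)\cap\triangle_d$ restricted to vectors supported on $I$. That explains why masking is the natural operation, rather than merely checking that it works. The abandoned attempt with $q_t=q_*+t\mathbf1_I$ and the unused $z$ in your Step~2 can simply be deleted.
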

Thus, $w_*$ is available in closed form as a normalized masked gradient.
Its complexity is subject to the gradient evaluation of $\nabla f(q_*)$.

\subsubsection{Gauge Score}
To use $E=S$ as an error unit, assume
\begin{equation}
h_S\prn{w}>0\quad\text{for all }w\in\triangle_d.
\label{eq:positivity}
\end{equation}
Under \cref{ass:sets}, this is equivalent to $\rho\triangle_d^{\le}\subseteq S$ for some $\rho>0$, i.e., $S$ does not require any failure probability to be exactly zero.
Under the gauge choice $E=S$, we have the proportional enlargement $S+\epsilon S=\prn{1+\epsilon}S$.
Now we see that the gauge excess is related to the gauge function of $S$, $\gamma_S$.

\begin{proposition}
\label{prop:gauge}
Under \cref{ass:sets} and \eqref{eq:positivity},
\begin{equation}
V_S\prn{U,S}=\pos{\max_{x\in U}\gamma_S\prn{x}-1}
\label{eq:gauge-excess}
\end{equation}
holds.
If \eqref{eq:gauge-excess} is positive, let $x_*\in U$ maximize $\gamma_S$, $\lambda_*\coloneqq\gamma_S\prn{x_*}$, and $q_*\coloneqq x_*/\lambda_*$.
Then $q_*$ lies on the boundary of $S$, and every $w_*\in\triangle_d$ satisfying $h_S\prn{w_*}=\inpr{w_*,q_*}$ also satisfies $\inpr{w_*,x_*}=h_U\prn{w_*}$ and maximizes $v_S$.
\end{proposition}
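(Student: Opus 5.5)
The plan is to apply \cref{thm:containment} with $E=S$ and to use the proportional enlargement $S+\epsilon S=(1+\epsilon)S$. Together these give
\[
V_S(U,S)=\min\Set{\epsilon\ge0}{U\subseteq(1+\epsilon)S}.
\]
First I record that \eqref{eq:positivity} makes $\gamma_S$ finite on $\R^d_+$. Indeed $\rho\triangle_d^\le\subseteq S$, and this simplex is a neighborhood of $0$ in $\R^d_+$. Also, $S$ is compact and contains $0$, so $x\in\lambda S$ iff $\gamma_S(x)\le\lambda$ for $\lambda>0$; closedness of $S$ gives attainment of the infimum. Hence $U\subseteq(1+\epsilon)S$ iff $\max_{x\in U}\gamma_S(x)\le1+\epsilon$. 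The maximum exists because $\gamma_S$ is convex and finite on $\R^d_+$, hence continuous there, and $U$ is compact. Minimizing over $\epsilon\ge0$ yields \eqref{eq:gauge-excess}.

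For the second part, assume $\lambda_*=\max_U\gamma_S>1$ and set $q_*=x_*/\lambda_*$. Homogeneity gives $\gamma_S(q_*)=1$. Then $q_*\in S$ by closedness, and $q_*$ is on the boundary of $S$, since $tq_*\notin S$ for $t>1$. Now take $w_*\in\triangle_d$ with $h_S(w_*)=\inpr{w_*,q_*}$; such $w_*$ exist by the supporting-normal argument discussed below. Then
\[
\inpr{w_*,x_*}=\lambda_*h_S(w_*)=h_{\lambda_*S}(w_*)\ge h_U(w_*),
\]
because $U\subseteq\lambda_*S$. Since $x_*\in U$ also gives $\inpr{w_*,x_*}\le h_U(w_*)$, we get equality, $\inpr{w_*,x_*}=h_U(w_*)$. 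Consequently
\[
v_S(w_*)=\frac{\lambda_*h_S(w_*)-h_S(w_*)}{h_S(w_*)}=\lambda_*-1=V_S(U,S),
\]
where we divide by $h_S(w_*)>0$ using \eqref{eq:positivity}. On the other hand, \cref{thm:containment} shows that $v_S(w)\le V_S(U,S)$ for every $w\in\triangle_d$. So $w_*$ maximizes $v_S$.

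The main obstacle is the step showing that a supporting normal in $\triangle_d$ exists at $q_*$; the statement quantifies over such $w_*$, so nonemptiness should be justified. I would argue via \cref{lem:containment} applied to $B=S$ and the point $tq_*$ with $t>1$. The lemma's proof produces $w_t\in\triangle_d$ with $h_S(w_t)<\inpr{w_t,tq_*}$. Taking $t\downarrow1$ and extracting a convergent subsequence in the compact set $\triangle_d$ gives a limit $w_*\in\triangle_d$. Continuity of $h_S$ then gives $h_S(w_*)\le\inpr{w_*,q_*}$, and the reverse inequality holds because $q_*\in S$.
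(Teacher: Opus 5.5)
Your proof is correct. The second half matches the paper. Your chain $\inpr{w_*,x_*}=\lambda_*h_S(w_*)=h_{\lambda_*S}(w_*)\ge h_U(w_*)$, followed by $v_S(w_*)=\lambda_*-1=V_S(U,S)$, is \cref{lem:certificate} written out for $E=S$.

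The first half takes a different route. You derive \eqref{eq:gauge-excess} primally, from \cref{thm:containment} and the sublevel identity $\Set{x}{\gamma_S(x)\le\lambda}=\lambda S$. The paper works dually. Via \eqref{eq:infconv-dual}, it identifies $\gamma_S$ on $\R^d_+$ with $g(x)=\max_{w\in\triangle_d}\inpr{w,x}/h_S(w)$, first where either exceeds $1$ and then everywhere by homogeneity, and reads off $V_S=\max_U d_S$.

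Your route is more elementary, but it shifts the burden onto showing that $\max_U\gamma_S$ is attained, and your reason there is too quick. ``Convex and finite, hence continuous'' holds on open (or relatively open) sets, not automatically at boundary points of the closed orthant. For instance, on $[0,\infty)$ the function with $f(0)=1$ and $f=0$ elsewhere is convex, finite, and discontinuous. This matters because $U$ can touch $\partial\R^d_+$, e.g.\ $U_0=\triangle_d^{\le}$.

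The claim is nevertheless true, and either of two fixes works:
\begin{itemize}
\item $\R^d_+$ is polyhedral, so \citet[Theorem~10.2]{rockafellar1970convex} gives upper semicontinuity relative to $\R^d_+$, which already suffices for attainment.
\item Directly: downward closedness, subadditivity of $\gamma_S$, and $\rho\triangle_d^{\le}\subseteq S$ give $\gamma_S(x)\le\gamma_S(\max\set{x,y})\le\gamma_S(y)+\norm{x-y}_1/\rho$ for $x,y\in\R^d_+$, with the maximum taken coordinatewise.
\end{itemize}
The paper's identification $\gamma_S=g$ gives this Lipschitz continuity for free, since $g$ is a maximum of uniformly Lipschitz linear functions.

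Finally, your limiting argument that produces a supporting normal $w_*\in\triangle_d$ at $q_*$ is correct and goes beyond the paper. The statement is universally quantified and does not need it, and the paper exhibits such a $w_*$ only for smooth $S$ in \cref{prop:gauge-smooth}. Your argument shows the gauge query is well defined for every $S$ satisfying the assumptions.
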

Since the gauge function $\gamma_S$ is convex, its maximization is again intractable.
Thus, we obtain a gauge maximizer $x_*$ by vertex enumeration, as in the $L^2$ and $L^\infty$ cases.
We can compute $w_*$ as a supporting hyperplane of $S$ at $q_*$ when $S$ is a smooth safe region, under the same assumption as \cref{prop:linf-smooth}.
\begin{proposition}
\label{prop:gauge-smooth}
Let $S=\Set{x\in\triangle_d^{\le}}{f\prn{x}\le\theta}$
for a differentiable convex $f$ that is nondecreasing in each coordinate
and a threshold $\theta\in\R$ with $f\prn{0}<\theta$.
Then \eqref{eq:positivity} holds.
Moreover, in the setting of \cref{prop:gauge}, we have $f\prn{q_*}=\theta$, and if $\nabla f\prn{q_*}\neq0$, then
$
  w_*\coloneqq
  {\nabla f\prn{q_*}}/{\norm{\nabla f\prn{q_*}}_1}
$
belongs to $\triangle_d$ and maximizes $v_S$.
\end{proposition}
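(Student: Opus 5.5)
To prove \cref{prop:gauge-smooth} we check \eqref{eq:positivity}, then $f(q_*)=\theta$, and finally show that the normalized gradient is a supporting normal of $S$ at $q_*$. Once that is done, \cref{prop:gauge} gives that $w_*$ maximizes $v_S$.

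\emph{Positivity.} Since $f$ is continuous and $f(0)<\theta$, there is $\rho\in(0,1]$ such that $f(x)\le\theta$ whenever $\norm{x}_1\le\rho$. Hence $\rho\triangle_d^{\le}\subseteq S$. For $w\in\triangle_d$, pick a coordinate $i$ with $w_i>0$; then $h_S(w)\ge\inpr{w,\rho e_i}=\rho w_i>0$. Convexity of $S$ follows from convexity of $f$, closedness from continuity, and downward closedness from monotonicity of $f$ together with downward closedness of $\triangle_d^\le$. So \cref{ass:sets} applies to $S$.

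\emph{The identity $f(q_*)=\theta$.} By \cref{prop:gauge}, $q_*\in\partial S$. Suppose $f(q_*)<\theta$. Because $S\subseteq\triangle_d^\le$, the remaining possibility is $\sum_i q_{*,i}=1$ with $q_*$ on the boundary of the simplex constraint. We rule this out as follows. We have $x_*\in U\subseteq\triangle_d^{\le}$, so $\sum_i x_{*,i}\le1$, and $\lambda_*>1$ because the excess is positive. Then
\[
\sum_i q_{*,i}=\frac{1}{\lambda_*}\sum_i x_{*,i}<1 .
\]
The constraint $x\ge0$ is not an obstruction either: scaling by $\mu>1$ keeps $\mu q_*$ nonnegative, and $f(\mu q_*)<\theta$ for $\mu$ close to $1$. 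Thus $\mu q_*\in S$ for some $\mu>1$, so $\gamma_S(x_*)\le\lambda_*/\mu<\lambda_*$, a contradiction. Hence $f(q_*)=\theta$.

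\emph{Supporting normal.} By monotonicity of $f$, $\nabla f(q_*)\ge0$. Together with the hypothesis $\nabla f(q_*)\ne0$, this gives $w_*\in\triangle_d$. For any $s\in S$, the gradient inequality for the convex function $f$ yields
\[
\inpr{\nabla f(q_*),s-q_*}\le f(s)-f(q_*)\le\theta-\theta=0 .
\]
Therefore $h_S(w_*)=\inpr{w_*,q_*}$, and \cref{prop:gauge} concludes that $w_*$ maximizes $v_S$.

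The main obstacle is the second step. There one must exclude that $q_*$ lies on the boundary of $S$ only because of the simplex constraint $\sum_i x_i\le1$ while $f(q_*)<\theta$. The argument above handles this via $\lambda_*>1$ and $x_*\in\triangle_d^\le$.
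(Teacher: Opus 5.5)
Your proof is correct and follows the paper's argument essentially step for step. You use $\rho\triangle_d^{\le}\subseteq S$ for \eqref{eq:positivity}, then $\sum_i q_{*,i}=\lambda_*^{-1}\sum_i x_{*,i}<1$ with a small dilation $\mu q_*\in S$ to contradict $\gamma_S(x_*)=\lambda_*$, and finally feed a supporting-normal check into \cref{prop:gauge}. The differences are cosmetic: you use the first-order inequality $\inpr{\nabla f(q_*),s-q_*}\le f(s)-f(q_*)$ where the paper differentiates along segments $q_*+t(s-q_*)$ in $S$, and your claim that $\sum_i q_{*,i}=1$ is the only remaining boundary case overlooks zero coordinates, which is harmless because your dilation argument covers them and never actually uses $q_*\in\partial S$.
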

Unlike the $L^\infty$ case (\cref{prop:linf-smooth}), the gradient masking is unnecessary:
every supporting normal at $q_*$ is optimal by \cref{prop:gauge},
and $q_*$ lies off the face $\inpr{\mathbf1,x}=1$ since $\inpr{\mathbf1,q_*}\le1/\lambda_*<1$.
Here, $\lambda_*=\min\Set{\lambda\ge1}{f\prn{x_*/\lambda}\le\theta}$ is obtained by bisection.

\begin{remark}
The pair $\prn{x_*,w_*}$ in \cref{prop:gauge} admits a dual interpretation via the \emph{polar set} of $S$:
$$S^\circ\coloneqq\Set{y\in\R^d}{\inpr{y,x}\le 1\;\forall x\in S}.$$
Since $0\in S$ by downward closedness, we have $\gamma_S=h_{S^\circ}$ by the duality~\citep[Theorem~14.5]{rockafellar1970convex}, and hence $\max_{x\in U}\gamma_S\prn{x}=\max_{y\in S^\circ}h_U\prn{y}$.
In \cref{prop:gauge}, $y_*\coloneqq w_*/h_S\prn{w_*}$ belongs to $S^\circ$ and satisfies $\inpr{y_*,x_*}=h_U\prn{y_*}=h_{S^\circ}\prn{x_*}$.
That is, $x_*$ is a supporting point of $U$ in direction $y_*$, and $y_*$ is a supporting point of $S^\circ$ in direction $x_*$; the query $w_*$ is the $\ell_1$-normalization of the latter.
\end{remark}

\begin{table*}[t]
\centering
\caption{All 24 model/specification pairs, grouped by expected label (12 cases per group). D and U count decisive and \UNKNOWN{} results. $N,W,T$ sum queries, query selection time, and runtime over all 12 runs in each row.}
\label{tab:exp-natural}
\small
\setlength{\tabcolsep}{8pt}
\begin{center}
\begin{tabular}{@{}llrrrrr@{}}
\toprule
Expected & Method & D ($\uparrow$) & U ($\downarrow$)
& $N$ ($\downarrow$) & $W$ (sec) & $T$ (sec) \\
\midrule
\SAFE
& $L^2$~\citep{watanabe2026posterior}
& 10 & 2 & 247 & 3044.879 & 3316.6 \\
& $L^\infty$
& 9 & 3 & 284 & 606.262 & 893.7 \\
& Gauge
& 12 & 0 & 78 & 39.046 & 105.3 \\
& LK-$\epsilon$~\citep{lammel2023convergence}
& 5 & 7 & 441 & 4242.446 & 4880.7 \\
& LK-$\gamma$~\citep{lammel2023convergence}
& 9 & 3 & 312 & 3608.969 & 4161.0 \\
\addlinespace
\UNSAFE
& $L^2$~\citep{watanabe2026posterior}
& 12 & 0 & 58 & 36.112 & 88.7 \\
& $L^\infty$
& 12 & 0 & 58 & 11.148 & 63.1 \\
& Gauge
& 12 & 0 & 24 & 8.633 & 35.0 \\
& LK-$\epsilon$~\citep{lammel2023convergence}
& 9 & 3 & 209 & 1569.323 & 1949.5 \\
& LK-$\gamma$~\citep{lammel2023convergence}
& 9 & 3 & 190 & 1528.724 & 1895.5 \\
\bottomrule
\end{tabular}
\end{center}
\end{table*}

\subsubsection{Comparison of the Three Rules}
Computationally, the three rules differ in the projection $q_*$.
The $L^2$ rule requires a $d$-dimensional Euclidean projection onto $S$, which calls for an iterative convex solver for general $S$.
In contrast, downward closedness reduces the $L^\infty$ distance to the scalar search $\dist_\infty\prn{x,S}=\min\Set{\delta\ge0}{\pos{x-\delta\mathbf1}\in S}$ (\cref{prop:linf}), and positive homogeneity reduces the gauge to $\gamma_S\prn{x}=\inf\Set{\lambda>0}{x/\lambda\in S}$; both are computed by bisection.
This leads to cheaper queries than the $L^2$ rule, as we will see in \cref{sec:experiments}.

\subsection{Example of Selected Queries}
\label{sec:example}

The following example shows how the three error units lead to different
query directions for the same safe region and outer approximation, illustrated in \cref{fig:example-query-directions}.
The score calculations are given in \cref{app:examples}.

\begin{example}
\label{ex:boxes}
Consider $S=\Set{x\in\R_+^2}{x_1+x_2\le1/4}$ and
$U=\brc{0,3/4}\times\brc{0,1/4}$.
The safe region bounds the sum of the two failure probabilities by $1/4$.
The vertex $x_*=\prn{3/4,1/4}$ is farthest from $S$ in Euclidean distance,
with projection $q_*=\prn{1/4,0}$ and residual $\prn{1/2,1/4}$.
Thus the $L^2$ rule selects $w_*=\prn{2/3,1/3}$.
In the $L^\infty$ distance, $x_*$ is again a farthest point,
with $\delta=1/2$ and $q_*=\prn{x_*-\delta\mathbf1}_+$.
The direction $w_*=\prn{1,0}$ satisfies \eqref{eq:linf-conditions}.
For the gauge, $\gamma_S\prn{x}=4\prn{x_1+x_2}$ is maximized at $x_*$ with $\lambda_*=4$, and the gauge rule selects
$w_*=\prn{1/2,1/2}$ supporting $S$ at $q_*=x_*/4$.
\end{example}

\section{Experiments}
\label{sec:experiments}

We evaluate how the choice of error unit affects safety verification by preliminary experiments.\footnote{Our implementation, including benchmarks, is available here: \url{https://github.com/levelfour/SafetyMDP}.}

\paragraph{Methods and benchmarks.}
We implement the $L^2$, $L^\infty$, and gauge rules of~\cref{sec:three}.
The two baselines approximate the achievable set $K_{\mdp}$ by the sandwich algorithm~\citep{forejt2012pareto}, iteratively selecting directions with the epsilon indicator or polyhedral gauge of~\citet{lammel2023convergence}.
We denote them by LK-$\epsilon$ and LK-$\gamma$, respectively, and the detail is summarized in \cref{sec:indicatorEnc}.
All methods share the same oracle on top of Storm~1.13.0~\citep{HenselJKQV22}.

We create three families of MDP benchmarks: gridworld, robot-mission, and
network-routing.
The models are written in the PRISM
language~\citep{KwiatkowskaNP04}.\footnote{
To the best of our knowledge, no existing benchmark suite is available for this safety verification problem.}
Gridworld models an exploration task under uncertainty,
with hazardous cells represented by target states.
Robot-mission models a robot attempting to complete a sequence
of tasks, with potential failures during each task.
Network-routing is a simplified model of packet transmission,
with different transmission failures represented by target states.

For each benchmark family, we construct MDPs with
$d\in\{4,6,8,10\}$ target states to test the scalability in the dimension~$d$.
For each MDP, we specify two safe regions, one expected to yield
a \SAFE{} verdict and the other an \UNSAFE{} verdict.
This gives eight verification instances per family and
24 instances in total, with 12 expected to be \SAFE{} and
12 expected to be \UNSAFE{}.
All safe regions have the form
$S=\Set{x\in\triangle_d^{\le}}{f\prn{x}\le\theta}$,
where $f$ is quadratic and $\theta\in\R$ is a threshold.
Specifically, we use the following common family of convex
quadratic safety specifications across all benchmarks:
\[
f(x)=\frac{1}{2m}\sum_{j=1}^{m}
\biggl[
\biggl(\frac{a_j^\top x}{b_j}\biggr)^2
+\frac{a_j^\top x}{b_j}
\biggr].
\]
For each benchmark,  we set the limit of query selection steps as $50$, and if the algorithm cannot decide within the limit, it returns \UNKNOWN{}. 

\paragraph{Results.}
\Cref{tab:exp-natural} summarizes the results.
The $L^\infty$ and gauge rules mostly outperform the $L^2$ rule in runtime.
In our implementation, the former compute boundary points by bisection and then obtain query directions using
explicit gradient formulas
(\cref{prop:linf-smooth,prop:gauge-smooth}, respectively),
whereas the latter solves a $d$-dimensional convex projection
problem numerically; this difference may explain the performance difference. 
The sandwich baselines, LK-$\epsilon$ and LK-$\gamma$,
select query directions to approximate the achievable set
without using the safe region.
In contrast, the CEGAR methods use the safe region to guide refinement.
This difference may explain why the sandwich baselines often
underperform the CEGAR methods.
Note that \UNSAFE{} cases tend to halt earlier 
than \SAFE{} cases, since it suffices to find a scheduler whose
reachability-probability vector lies outside the safe region. We also observe occasional numerical difficulties during refinement,
specifically in outer-polytope vertex enumeration and
safe-region support computations;
see~\cref{tab:all-natural-query-counts}
in~\cref{app:experimental-details} for details.

\section{Conclusion}
We studied a convex-analytic formulation to synthesize a query to test set containment, in particular for downward-closed sets.
Empirically, the gauge rule exhibits clear benefits over the $L^2$ rule.
We conjecture that the condition number of $S$ affects the query selection, and it is an interesting future work to investigate how it affects the choice of an error unit $E$.

\subsection*{Acknowledgments}
HB is supported by JST PRESTO (JPMJPR24K6) and JST BOOST (JPMJBY24E8).
SS is supported by JST BOOST (JPMJBY24D1).
KW is supported by the JST PRESTO (JPMJPR25KD).

\subsection*{AI use statement}
In this work, we used generative AI tools for generating synthetic data sets; assisting in the proof writing; implementing methods.
We have not used generative AI tools for helping develop theoretical models or conceptual frameworks; formulating mathematical claims; designing or providing feedback on research methodology or experiments; assisting with translation; interpreting results.
Other tasks including proposing hypotheses and supporting qualitative and thematic data analysis are not applicable to this work.
Additionally, we used generative AI tools for identifying relevant literature, creating figures (\cref{fig:example-query-directions}), and drafting the initial version of the paper.
We have reviewed all AI-assisted work by substantially rewriting the initially generated draft by the authors themselves, reorganizing the paper structure, and reviewing the generated programs and datasets.
We take responsibility for the final content of this work, including text, claims, or artifacts produced with the aid of generative AI.

\bibliography{references}

\clearpage
\appendix
\thispagestyle{empty}
\crefalias{section}{appendix}
\crefalias{subsection}{appendix}
\begin{revision}
\section{Proofs}
\label{app:proofs}

\subsection{Auxiliary claims}
Throughout this appendix, for an error unit $E$, we use the distance function,
\[
d_E\prn{x}\coloneqq\prn{\chi_S\infconv\gamma_E}\prn{x}=\min_{q\in S}\gamma_E\prn{x-q},
\qquad x\in\R^d_+,
\]
where $\chi_S$ is the indicator function of $S$ and $\gamma_E$ is the gauge function of $E$.
The following proposition, referred from \cref{rem:infconv}, shows that the excess $V_E(U,S)$ can be regarded as a generalized directed Hausdorff distance.
Later, this yields the three query rules as the Hausdorff distance maximization, combined with \cref{lem:certificate} below.

\begin{proposition}
\label{prop:infconv}
Under \cref{ass:sets}, let $E\subseteq\R_+^d$ be an error unit that is nonempty, compact, convex, downward closed, and satisfies $h_E>0$ over $\triangle_d$. For every $x\in\R^d_+$ and $\epsilon\ge0$, we have $x\in S+\epsilon E$ if and only if $d_E\prn{x}\le\epsilon$, and
\begin{equation}
d_E\prn{x}=\pos{\max_{w\in\triangle_d}\frac{\inpr{w,x}-h_S\prn{w}}{h_E\prn{w}}}.
\label{eq:infconv-dual}
\end{equation}
Consequently, $d_E$ is finite and continuous on $\R^d_+$, and $V_E\prn{U,S}=\max_{x\in U}d_E\prn{x}$.
Moreover, under $h_S>0$ on $\triangle_d$ assumed in \eqref{eq:positivity}, we have $d_S\prn{x}=\pos{\gamma_S\prn{x}-1}$ for every $x\in\R^d_+$.
\end{proposition}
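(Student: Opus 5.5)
The plan is to reduce everything to \cref{thm:containment} applied to the singleton $U=\set{x}$. We first check the identity $\set{\epsilon\ge0 : x\in S+\epsilon E}=\set{\epsilon\ge0 : d_E(x)\le\epsilon}$. For one inclusion, if $x=q+\epsilon e$ with $q\in S$ and $e\in E$, then $\gamma_E(x-q)=\gamma_E(\epsilon e)\le\epsilon$, so $d_E(x)\le\epsilon$. For the other, if $\min_{q\in S}\gamma_E(x-q)\le\epsilon$, take a minimizer $q$. The minimum is attained because $S$ is compact and $\gamma_E$ is lower semicontinuous, $E$ being closed. Then $x-q\in\epsilon' E$ for every $\epsilon'>\epsilon$, and closedness of $E$ gives $x-q\in\epsilon E$. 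Some care is needed here: $x-q$ may have negative coordinates, whereas $E\subseteq\R^d_+$. In that case $\gamma_E(x-q)=+\infty$, so such $q$ never realize a finite minimum. We must therefore show that $d_E(x)<\infty$, i.e.\ that some $q\in S$ has $q\le x$; the choice $q=0\in S$ works by downward closedness. Finiteness of $\gamma_E(x)$ for $x\ge0$ also needs $E$ to be absorbing in $\R^d_+$. This follows because $h_E>0$ on $\triangle_d$ forces $h_E(e_i)>0$, so $E$ contains a point with positive $i$-th coordinate. Convexity and downward closedness then give $\rho[0,1]^d\subseteq E$ for some $\rho>0$.

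Given this equivalence, \cref{thm:containment} with $U=\set{x}$ yields $x\in S+\epsilon E$ if and only if $\max_{w\in\triangle_d}(\inpr{w,x}-h_S(w))/h_E(w)\le\epsilon$. Taking the least admissible $\epsilon\ge0$ on both sides gives \eqref{eq:infconv-dual}. On the left, the set of admissible $\epsilon$ is $[d_E(x),\infty)$. On the right, it is $[\,[\max v]_+,\infty)$. The maximum exists because the map $w\mapsto(\inpr{w,x}-h_S(w))/h_E(w)$ is continuous on the compact set $\triangle_d$: support functions of compact sets are finite and convex, hence continuous, and $h_E>0$ there.

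Continuity of $d_E$ on $\R^d_+$ then follows from \eqref{eq:infconv-dual}. For each $w$, the map $x\mapsto(\inpr{w,x}-h_S(w))/h_E(w)$ is affine. Because $\min_{\triangle_d}h_E>0$, this family has uniformly bounded slopes, so its pointwise maximum is finite and Lipschitz. Next we obtain $V_E(U,S)=\max_{x\in U}d_E(x)$. Write $h_U(w)=\max_{x\in U}\inpr{w,x}$ and exchange the two maxima:
\[
\max_{w\in\triangle_d}\max_{x\in U}\frac{\inpr{w,x}-h_S(w)}{h_E(w)}.
\]
The positive part then commutes with the outer maximum over $x$. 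The exchange is legitimate because both maxima are attained on compact sets. (Alternatively, \cref{thm:containment} gives $U\subseteq S+\epsilon E$ if and only if $d_E\le\epsilon$ on $U$.)

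For the gauge statement, take $E=S$ under \eqref{eq:positivity}. By convexity of $S$ and $0\in S$, we have $S+\epsilon S=(1+\epsilon)S$. Hence $d_S(x)\le\epsilon$ if and only if $x\in(1+\epsilon)S$, which holds if and only if $\gamma_S(x)\le1+\epsilon$, using closedness of $S$ so that the gauge infimum is attained. Minimizing over $\epsilon\ge0$ gives $d_S(x)=[\gamma_S(x)-1]_+$. The main obstacle is the bookkeeping in the first step, namely the attainment and finiteness issues for $\gamma_E$ when $E$ is asymmetric and confined to $\R^d_+$. The remaining steps are direct consequences of \cref{thm:containment}.
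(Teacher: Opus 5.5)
Your proposal is correct and follows essentially the same route as the paper: the equivalence $x\in S+\epsilon E\iff d_E(x)\le\epsilon$ via attainment of the minimum and closedness of $E$, then the dual formula from the singleton containment test, a uniform Lipschitz bound, the exchange of maxima, and $S+\epsilon S=(1+\epsilon)S$ for the gauge case. One formal fix: for $x\in\R^d_+\setminus\triangle_d^{\le}$ the singleton $\set{x}$ does not satisfy \cref{ass:sets}, so you should apply \cref{lem:containment} directly with $B=S+\epsilon E$, as the paper does, rather than \cref{thm:containment}; also note that at $\epsilon=0$ you need boundedness of $E$ as well as closedness, and that your separate finiteness argument via $\rho[0,1]^d\subseteq E$ is valid but unnecessary, since finiteness already follows from the dual formula.
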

\begin{proof}
To characterize the sublevel of $d_E$, we first characterize the sublevel set of $\gamma_E$.
Since $E$ is nonempty and downward closed, we have $0\in E$, so $\mu E\subseteq\lambda E$ for $0\le\mu\le\lambda$ by convexity. For $\lambda>0$, closedness of $E$ then gives $\gamma_E\prn{z}\le\lambda$ if and only if $z\in\lambda E$; for $\lambda=0$, $\gamma_E\prn{z}\le0$ if and only if $z\in\bigcap_{\lambda>0}\lambda E=\set{0}=0E$ by boundedness of $E$. Thus $\Set{z}{\gamma_E\prn{z}\le\epsilon}=\epsilon E$ for all $\epsilon\ge0$. In particular, $s\mapsto\gamma_E\prn{x-s}$ has closed sublevel sets, so it is lower semicontinuous and attains its minimum (possibly $+\infty$) on the compact set $S$, and $d_E$ is well-defined.

If $d_E\prn{x}\le\epsilon$, a minimizer $s\in S$ satisfies $x-s\in\epsilon E$, so $x\in S+\epsilon E$. Conversely, $x=s+\epsilon t$ with $s\in S$ and $t\in E$ gives $d_E\prn{x}\le\gamma_E\prn{\epsilon t}\le\epsilon$.
This proves the first claim: $x\in S+\epsilon E$ if and only if $d_E\prn{x}\le\epsilon$.
Applying \cref{lem:containment} with $U=\set{x}$ and $B=S+\epsilon E$, together with $h_B=h_S+\epsilon h_E$, shows that $x\in S+\epsilon E$ if and only if $\inpr{w,x}-h_S\prn{w}\le\epsilon h_E\prn{w}$ for all $w\in\triangle_d$, that is, $\epsilon\ge M\prn{x}\coloneqq\max_{w\in\triangle_d}\prn{\inpr{w,x}-h_S\prn{w}}/h_E\prn{w}$.
This maximum exists because the ratio is continuous in $w$ and $\triangle_d$ is compact.
Combined with the first claim, $d_E\prn{x}\le\epsilon$ if and only if $\pos{M\prn{x}}\le\epsilon$ for every $\epsilon\ge0$, so $d_E\prn{x}=\pos{M\prn{x}}$. This $d_E\prn{x}$ is finite.

Next, we show that $d_E$ is continuous.
The support function $h_E$ is continuous because $E$ is compact, so it attains its minimum $\mu$ on the compact set $\triangle_d$, which is positive because of the assumption $h_E>0$.
For each $w\in\triangle_d$ and $x,y\in\R^d_+$, H\"older's inequality gives $\abs{\inpr{w,x}-\inpr{w,y}}\le\norm{x-y}_\infty$, so the function $x\mapsto\prn{\inpr{w,x}-h_S\prn{w}}/h_E\prn{w}$ is $\prn{1/\mu}$-Lipschitz in $\norm{\cdot}_\infty$.
The pointwise maximum $M$ of these functions over $w\in\triangle_d$ is again $\prn{1/\mu}$-Lipschitz, and so is $d_E=\pos{M}$.
In particular, $d_E$ is continuous on $\R^d_+$.

Since $U$ is compact and $d_E$ is continuous, $\max_{x\in U}d_E\prn{x}$ exists, and interchanging the two maxima, together with the monotonicity of $\pos{\cdot}$, gives $\max_{x\in U}d_E\prn{x}=V_E\prn{U,S}$ by the definition of the excess in~\eqref{eq:score}.

Finally, we focus on the case $E=S$.
The convexity gives $S+\epsilon S=\prn{1+\epsilon}S$, and the gauge property above, applied to $S$ with $\lambda=1+\epsilon>0$, gives $x\in\prn{1+\epsilon}S\iff\gamma_S\prn{x}\le1+\epsilon$. Hence $d_S\prn{x}\le\epsilon\iff\pos{\gamma_S\prn{x}-1}\le\epsilon$ for all $\epsilon\ge0$, and the two values coincide.
\end{proof}

The $L^2$ and $L^\infty$ rules both rest on the following common fact, which reduces the one-sided distance $d_{E_p}$ to the usual $p$-distance to $S$.
Note that $S$ is a safe region, and its geometric properties are introduced in \cref{ass:sets}.
\begin{lemma}
\label{lem:norm-enlargement}
Under \cref{ass:sets}, let $p\in\set{2,\infty}$ and $x\in\R^d_+$. Then there is a point $q\in S$ nearest to $x$ in $\norm{\cdot}_p$ with $q\le x$ coordinatewise, and $d_{E_p}\prn{x}=\dist_p\prn{x,S}$.
\end{lemma}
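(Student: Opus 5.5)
The plan has two parts: first produce the nearest point below $x$, then compare the gauge of $E_p$ with the $p$-norm.

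\emph{Existence of a nearest point with $q\le x$.} Take any nearest point $q'\in S$ to $x$ in $\norm{\cdot}_p$; it exists because $S$ is compact and nonempty and the norm is continuous. Set $q\coloneqq\min\set{q',x}$ coordinatewise. Since $0\le q\le q'$ and $S$ is downward closed, $q\in S$. Coordinatewise we have $\abs{x_i-q_i}\le\abs{x_i-q'_i}$: if $q'_i\le x_i$ then $q_i=q'_i$, and otherwise $q_i=x_i$. The $p$-norm is monotone in the absolute values of the coordinates for both $p=2$ and $p=\infty$, so $\norm{x-q}_p\le\norm{x-q'}_p$. Hence $q$ is also a nearest point, and $q\le x$. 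This mirrors the argument in \cref{lem:containment}, except that it does not rely on uniqueness, which matters for $p=\infty$.

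\emph{Identifying the gauge.} For $z\in\R^d_+$ we have $z\in\lambda E_p$ if and only if $\norm{z}_p\le\lambda$, so $\gamma_{E_p}(z)=\norm{z}_p$ on $\R^d_+$. For $z\notin\R^d_+$, $z\notin\lambda E_p$ for every $\lambda\ge0$, so $\gamma_{E_p}(z)=+\infty$. This gives
\[
d_{E_p}(x)=\min_{s\in S,\ s\le x}\norm{x-s}_p .
\]
The constrained minimum is at least $\dist_p(x,S)$, since it minimizes over a subset of $S$. It is also at most $\dist_p(x,S)$, because the point $q$ from the first step is feasible and attains $\dist_p(x,S)$. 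Therefore $d_{E_p}(x)=\dist_p(x,S)$.

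The main obstacle is keeping the one-sidedness of $E_p$ in view. The gauge is infinite off the orthant, which is exactly why we need a nearest point lying below $x$. The truncation $\min\set{q',x}$ supplies this uniformly for both norms without any uniqueness argument.
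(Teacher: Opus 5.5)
Your proposal is correct and follows essentially the same route as the paper. You truncate an arbitrary nearest point by its coordinatewise minimum with $x$, using downward closedness and coordinatewise monotonicity of $\norm{\cdot}_p$; you then use $\gamma_{E_p}(z)=\norm{z}_p$ for $z\ge0$ and $+\infty$ otherwise to reduce $d_{E_p}(x)$ to the minimum over $\Set{s\in S}{s\le x}$, which the truncated point shows equals $\dist_p(x,S)$.
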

\begin{proof}
Compactness of $S$ ensures the existence of a nearest point $s_*$.
Replacing $s_*$ by its coordinatewise minimum with $x$ preserves membership in $S$ by downward closedness, and cannot increase the distance because each coordinate
of $x-s_*$ either stays the same or becomes zero.
Since $\gamma_{E_p}\prn{z}=\norm{z}_p$ if $z\ge0$ and $\gamma_{E_p}\prn{z}=+\infty$ otherwise (which is basic), we have $d_{E_p}\prn{x}=\min\Set{\norm{x-q}_p}{q\in S,\ q\le x}$, which equals $\dist_p\prn{x,S}$ by the first claim.
\end{proof}

All three rules ($L^2$, $L^\infty$, and gauge) certify the optimality of a direction $w$ in the same way.
The following lemma characterizes a sufficient condition of $w$ to maximize the direction score $v_E$.
\begin{lemma}
\label{lem:certificate}
Under \cref{ass:sets}, let $E\subseteq\R_+^d$ be an error unit that is nonempty, compact, convex, downward closed, and satisfies $h_E>0$ over $\triangle_d$. Let $x_*\in U$ maximize $d_E$ over $U$, and let $w\in\triangle_d$ satisfy
\begin{equation}
\inpr{w,x_*}-h_S\prn{w}\ge d_E\prn{x_*}\,h_E\prn{w}.
\label{eq:certificate}
\end{equation}
Then we have $h_U\prn{w}=\inpr{w,x_*}$, the inequality~\eqref{eq:certificate} holds with equality, and $w$ maximizes $v_E$.
\end{lemma}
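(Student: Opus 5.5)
The plan is to sandwich the quantity $\inpr{w,x_*}-h_S(w)$ between upper and lower bounds that both equal $d_E(x_*)\,h_E(w)$, using the dual formula \eqref{eq:infconv-dual} from \cref{prop:infconv} and the identity $V_E(U,S)=\max_{x\in U}d_E(x)=d_E(x_*)$. We take $\varepsilon_*\coloneqq d_E(x_*)=V_E(U,S)$ and record that $h_E(w)>0$ because $w\in\triangle_d$.

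\textbf{Upper bound.} \Cref{thm:containment} says $v_E(w')\le\varepsilon$ for all $w'\in\triangle_d$ whenever $U\subseteq S+\varepsilon E$. Since $V_E(U,S)$ is the minimal such $\varepsilon$, we have $U\subseteq S+\varepsilon_*E$. Hence $v_E(w)\le\varepsilon_*$, that is,
\[
h_U(w)-h_S(w)\le \varepsilon_* h_E(w).
\]

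\textbf{Chain of inequalities.} Since $x_*\in U$, we have $\inpr{w,x_*}\le h_U(w)$. Combining this with the certificate \eqref{eq:certificate} and the upper bound gives
\[
\varepsilon_* h_E(w)\le \inpr{w,x_*}-h_S(w)\le h_U(w)-h_S(w)\le \varepsilon_* h_E(w).
\]
All the inequalities in this chain are therefore equalities. This yields $h_U(w)=\inpr{w,x_*}$, and it shows that \eqref{eq:certificate} holds with equality.

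\textbf{Optimality.} Dividing the resulting equality by $h_E(w)>0$ gives $v_E(w)=\varepsilon_*$. We then split into two cases.
\begin{itemize}
\item If $\max_{w'\in\triangle_d}v_E(w')\ge0$, the positive part in the definition of $V_E$ is inactive. So $\max v_E=V_E=\varepsilon_*=v_E(w)$, and $w$ is a maximizer.
\item If $\max v_E<0$, then $\varepsilon_*=0$, while $v_E(w)=0$ exceeds $\max v_E$. This is a contradiction, so this case cannot occur.
\end{itemize}

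The only delicate point is this last step: handling the positive part $\pos{\cdot}$ in $V_E$, and noting that the certificate itself forces the maximum of $v_E$ to be nonnegative. The rest is a direct sandwich argument built on \cref{thm:containment}.
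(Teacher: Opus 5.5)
Your proof is correct and is essentially the paper's argument: the same sandwich $\varepsilon_* h_E(w)\le\inpr{w,x_*}-h_S(w)\le h_U(w)-h_S(w)\le\varepsilon_* h_E(w)$ (which the paper writes after dividing by $h_E(w)$), with $\varepsilon_*=d_E(x_*)=V_E(U,S)$ taken from \cref{prop:infconv}. The differences are minor. The paper reads off $v_E(w)\le V_E(U,S)$ directly from the definition of the excess rather than through \cref{thm:containment}, and your explicit case split on the sign of $\max v_E$ spells out the step that the paper compresses into ``or equivalently''.
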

\begin{proof}
We have $h_U\prn{w}\ge\inpr{w,x_*}$ by definition of the support function.
This together with the assumption \eqref{eq:certificate} and \cref{prop:infconv} leads to
\[
v_E\prn{w}=\frac{h_U\prn{w}-h_S\prn{w}}{h_E\prn{w}}\ge\frac{\inpr{w,x_*}-h_S\prn{w}}{h_E\prn{w}}\ge d_E\prn{x_*}=V_E\prn{U,S}\ge v_E\prn{w},
\]
where the last inequality follows from the definition of the excess~\eqref{eq:score}.
Hence all inequalities are equalities, which gives $V_E(U,S) = v_E(w)$, or equivalently, $w$ maximizes $v_E$.
\end{proof}

\subsection{Proofs for \texorpdfstring{$L^2$}{L2} score}
\begin{proof}[Proof of \cref{prop:l2}]
By \cref{prop:infconv,lem:norm-enlargement}, we have the reduction of the excess to the directed Hausdorff distance: $V_{E_2}\prn{U,S}=\max_{x\in U}d_{E_2}\prn{x}=\max_{x\in U}\dist_2\prn{x,S}$.

Let this value be positive and fix a maximizer $x_*$.
Since the Euclidean projection $q_*=\proj_S^2(x_*)$ is the unique nearest point, it coincides with the nearest point below $x_*$ given by \cref{lem:norm-enlargement}, that is, $q_*\le x_*$.
Moreover, $x_*\neq q_*$ because $\norm{x_*-q_*}_2=V_{E_2}\prn{U,S}>0$.
Thus $x_*-q_*$ is a nonzero nonnegative vector, and its $\ell_1$-normalization $w_*$ in \eqref{eq:l2-direction} belongs to $\triangle_d$. The projection inequality gives $\inpr{x_*-q_*,s-q_*}\le0$ for all $s\in S$, and dividing both sides by $\norm{x_*-q_*}_1$ gives $h_S\prn{w_*}=\inpr{w_*,q_*}$.
Since $w_*$ and $x_*-q_*$ are parallel, and we have $h_{E_2}\prn{w_*}=\norm{w_*}_2$, we obtain
\begin{equation}
\inpr{w_*,x_*}-h_S\prn{w_*}=\inpr{w_*,x_*-q_*}=\norm{w_*}_2\norm{x_*-q_*}_2=h_{E_2}\prn{w_*}\,d_{E_2}\prn{x_*},
\end{equation}
so $w_*$ maximizes the direction score $v_{E_2}$ by \cref{lem:certificate}.
\end{proof}

\subsection{Proofs for \texorpdfstring{$L^\infty$}{L-infty} score}
\begin{proof}[Proof of \cref{prop:linf}]
By \cref{prop:infconv,lem:norm-enlargement}, we have the reduction of the excess to the directed Hausdorff distance: $V_{E_\infty}\prn{U,S}=\max_{x\in U}d_{E_\infty}\prn{x}=\max_{x\in U}\dist_\infty\prn{x,S}$.

We check that $q_*\coloneqq\pos{x_*-\delta\mathbf1}$ lies in $S$ and is a nearest point to $x_*$ in the $L^\infty$ sense.
By \cref{lem:norm-enlargement}, some nearest point $q\le x_*$ exists, and $q_*\le q$ holds coordinatewise because $q_{*,i}=\max\set{x_{*,i}-\delta,0}\le q_i$ follows from $q_i\ge0$ and $x_{*,i}-q_i\le\norm{x_*-q}_\infty=\delta$.
Downward closedness then gives $q_*\in S$. Since $x_{*,i}-q_{*,i}=\min\set{x_{*,i},\delta}\le\delta$ for every $i$, we have $\norm{x_*-q_*}_\infty\le\delta=\dist_\infty\prn{x_*,S}$, so $q_*$ is also a nearest point.

Next, we show the existence of a direction $w_*\in\triangle_d$ satisfying the certificate~\eqref{eq:linf-conditions}.
By the choice of $x_*$, the excess equals $\delta$; additionally, $\delta>0$ holds by assumption.
Since $h_{E_\infty}=1$ on $\triangle_d$ and $\delta>0$, \eqref{eq:infconv-dual} at $x_*$ reads $\delta=d_{E_\infty}\prn{x_*}=\max_{w\in\triangle_d}\prn{\inpr{w,x_*}-h_S\prn{w}}$, and we let $w_*\in\triangle_d$ attain this maximum.
Since $q_*\in S$, the definition of the support function and H\"older's inequality give
\begin{equation}
\delta=\inpr{w_*,x_*}-h_S\prn{w_*}
\le\inpr{w_*,x_*-q_*}
\le\norm{w_*}_1\norm{x_*-q_*}_\infty=\delta.
\label{eq:linf-upper}
\end{equation}
Both inequalities are therefore equalities, which gives the two conditions in the certificate~\eqref{eq:linf-conditions}.

Conversely, every $w_*\in\triangle_d$ satisfying \eqref{eq:linf-conditions} has $\inpr{w_*,x_*}-h_S\prn{w_*}=\inpr{w_*,x_*-q_*}=\delta=h_{E_\infty}\prn{w_*}\,d_{E_\infty}\prn{x_*}$,
where the last identity follows from $h_{E_\infty}=1$ and $\delta=d_{E_\infty}(x_*)$.
Therefore it maximizes $v_{E_\infty}$ by \cref{lem:certificate}.
\end{proof}

Unlike the $L^2$ case in \cref{prop:l2}, we do not have $w_*$ in closed form, which is defined implicitly through the maximization problem in \eqref{eq:infconv-dual}.
Subsequently, we consider a smooth safe region to get a concise form of $w_*$ in \cref{prop:linf-smooth}.
To this end, we need the following lemma to reduce the certificate~\eqref{eq:linf-conditions} to a condition on the normal cone of $S$ at $q_*$.
Throughout, $I\coloneqq\Set{i}{x_{*,i}-q_{*,i}=\delta}$ and $\mathbf1_I\in\set{0,1}^d$ is its indicator vector, as in \cref{sec:three}.

\begin{lemma}
\label{lem:linf-saturated}
In the setting of \cref{prop:linf}, the set $I$ is nonempty and $q_{*,i}=0$ for every $i\notin I$.
Moreover, $w\in\triangle_d$ satisfies \eqref{eq:linf-conditions} if and only if
$h_S\prn{w}=\inpr{w,q_*}$ and $w_i=0$ for every $i\notin I$.
\end{lemma}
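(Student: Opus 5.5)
The plan is to read everything off the explicit form $q_*=\pos{x_*-\delta\mathbf1}$ from \cref{prop:linf}. Coordinatewise, $x_{*,i}-q_{*,i}=\min\set{x_{*,i},\delta}$, as already noted in the proof of \cref{prop:linf}. Hence a coordinate $i\notin I$ satisfies $x_{*,i}<\delta$. For such $i$ the positive part is active, so $q_{*,i}=0$. This gives the second claim of the first sentence immediately.

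For nonemptiness of $I$, suppose to the contrary that $I=\emptyset$. Then $\norm{x_*-q_*}_\infty<\delta$, because the maximum is taken over finitely many coordinates. This contradicts $\delta=\dist_\infty\prn{x_*,S}$ together with $q_*\in S$. A second route is to take the certificate direction $w_*$ from \cref{prop:linf} and use the equivalence proved below: it forces $w_*$ to be supported on $I$, and $w_*\in\triangle_d$ is nonzero. The first route is more direct and independent, so I will use it.

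For the equivalence, fix $w\in\triangle_d$ with $h_S\prn{w}=\inpr{w,q_*}$. It remains to show that $\inpr{w,x_*-q_*}=\delta$ holds if and only if $w_i=0$ for all $i\notin I$. Write
\[
\inpr{w,x_*-q_*}=\sum_i w_i\prn{x_*-q_*}_i,
\]
where every coordinate $\prn{x_*-q_*}_i\le\delta$ and $\sum_i w_i=1$ with $w\ge0$. Then
\[
\delta-\inpr{w,x_*-q_*}=\sum_{i\notin I}w_i\prn{\delta-\prn{x_*-q_*}_i}.
\]
Each factor $\delta-\prn{x_*-q_*}_i$ with $i\notin I$ is strictly positive, so the sum vanishes exactly when $w_i=0$ for all $i\notin I$. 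Both directions of the equivalence follow at once, since the first condition of \eqref{eq:linf-conditions} is shared verbatim.

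There is no real obstacle. The only point needing care is the strict positivity of $\delta-\prn{x_*-q_*}_i$ off $I$, which is just the definition of $I$, and the use of $\delta>0$, which is given in the setting of \cref{prop:linf}.
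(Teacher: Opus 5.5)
Your proof is correct and follows the paper's argument essentially verbatim: the identity $x_{*,i}-q_{*,i}=\min\{x_{*,i},\delta\}\le\delta$ gives $q_{*,i}=0$ off $I$; nonemptiness of $I$ comes from $\lVert x_*-q_*\rVert_\infty=\delta$, which you phrase equivalently via $q_*\in S$ and $\delta=\dist_\infty(x_*,S)$; and the equivalence follows from $\langle w,x_*-q_*\rangle\le\delta$ with equality iff $w$ vanishes off $I$. One small correction: $\delta>0$ is not actually used anywhere in this lemma (if $\delta=0$ then $I=[d]$ and every claim holds trivially), so there is no need to flag it as a point requiring care.
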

\begin{proof}
By the definition of $q_*$, we have $x_{*,i}-q_{*,i}=\min\set{x_{*,i},\delta}\le\delta$ for every $i$, with equality exactly on $I$.
Since $\norm{x_*-q_*}_\infty=\delta$, the set $I$ is nonempty.
For $i\notin I$, we have $x_{*,i}<\delta$ and hence $q_{*,i}=0$.
For $w\in\triangle_d$, we have
$\inpr{w,x_*-q_*}=\sum_iw_i\prn{x_{*,i}-q_{*,i}}\le\delta\sum_iw_i=\delta$,
with equality if and only if $w_i=0$ for every $i\notin I$.
\end{proof}

Since $q_*\in S$, the condition $h_S\prn{w}=\inpr{w,q_*}$ means that $w$ belongs to the normal cone 
$$N_S\prn{q_*}\coloneqq\Set{w\in\R^d}{\inpr{w,s-q_*}\le0\ \forall s\in S}.$$
By \cref{lem:linf-saturated}, the directions $w_*$ in \cref{prop:linf} are exactly the elements of $N_S\prn{q_*}\cap\triangle_d$ supported on $I$:
$w_*\in N_S(q_*)$ corresponds to the first certificate in \eqref{eq:linf-conditions}, and $w_{*,i}=0$ for every $i\not\in I$ corresponds to the second certificate in \eqref{eq:linf-conditions}.
With this lemma, we have a closed form of $w_*$ for smooth $S$, as shown below.
\begin{proof}[Proof of \cref{prop:linf-smooth}]
Based on \cref{lem:linf-saturated}, we consider the characterization of $w_*$ by the normal cone $N_S$.
By definition, $w\in N_S\prn{q_*}$ if and only if $q_*$ solves
\begin{equation}
\max_{s\in\R^d}\ \inpr{w,s}
\quad\text{subject to}\quad
f\prn{s}\le\theta,\quad
\inpr{\mathbf1,s}\le1,\quad
s_i\ge0\ \ \prn{i\in[d]}.
\label{eq:linf-normal-problem}
\end{equation}
All constraint functions in \eqref{eq:linf-normal-problem} are convex and differentiable, and the point $\eta\mathbf1$ satisfies all of them strictly for small $\eta>0$ by $f\prn{0}<\theta$ and continuity; that is, Slater's condition holds for \eqref{eq:linf-normal-problem}.
Hence the KKT conditions are necessary and sufficient for the optimality of $q_*$, which gives
\[
N_S\prn{q_*}=\Set{\lambda\nabla f\prn{q_*}+\nu\mathbf1-\textstyle\sum_{i\in Z}\mu_ie_i}{\lambda,\nu\ge0,\ \mu\ge0},
\]
where $e_i\in\R^d$ is the $i$-th standard basis vector and $Z\coloneqq\Set{i}{q_{*,i}=0}$.
By complementary slackness,
$\lambda=0$ if $f\prn{q_*}<\theta$,
and $\nu=0$ if $\inpr{\mathbf1,q_*}<1$.
Since $f$ is nondecreasing in each coordinate, $\nabla f\prn{q_*}\ge0$.

We first show that the simplex constraint is inactive at $q_*$.
By definition, $q_*\le x_*$, and $q_{*,i}=x_{*,i}-\delta<x_{*,i}$ for $i\in I$, where $I$ is nonempty by \cref{lem:linf-saturated}.
Since $x_*\in U\subseteq\triangle_d^{\le}$, we obtain $\inpr{\mathbf1,q_*}<\inpr{\mathbf1,x_*}\le1$, and thus $\nu=0$.

Next, we show $\nabla f\prn{q_*}\odot\mathbf1_I\neq0$.
Take $w$ from \cref{prop:linf} and write $w=\lambda\nabla f\prn{q_*}-\sum_{i\in Z}\mu_ie_i$ with $\lambda,\mu\ge0$.
By \cref{lem:linf-saturated}, $w$ is supported on $I$, so
\[
1=\sum_{i\in I}w_i\le\lambda\sum_{i\in I}\nabla_if\prn{q_*}.
\]
Hence $\nabla f\prn{q_*}\odot\mathbf1_I\neq0$.
In particular, $\lambda>0$, so $f\prn{q_*}=\theta$ and $\nabla f\prn{q_*}\in N_S\prn{q_*}$.

Since $\set{1,\dots,d}\setminus I\subseteq Z$ by \cref{lem:linf-saturated}, we have
\[
\nabla f\prn{q_*}\odot\mathbf1_I=\nabla f\prn{q_*}-\sum_{i\notin I}\nabla_if\prn{q_*}e_i\in N_S\prn{q_*},
\]
with $\nabla_if\prn{q_*}\ge0$.
Hence $w_*$ belongs to $N_S\prn{q_*}\cap\triangle_d$ and is supported on $I$, and \cref{lem:linf-saturated} gives \eqref{eq:linf-conditions}.
\end{proof}

\subsection{Proofs for gauge score}
As in the $L^\infty$ case (\cref{prop:linf}), we first show the existence of the direction score maximizer.
\begin{proof}[Proof of \cref{prop:gauge}]
Let $g\prn{x}\coloneqq\max_{w\in\triangle_d}\inpr{w,x}/h_S\prn{w}$ for $x\in\R^d_+$.
Since $h_E=h_S$ for $E=S$, \eqref{eq:infconv-dual} and \cref{prop:infconv} give $\pos{\gamma_S\prn{x}-1}=d_S\prn{x}=\pos{g\prn{x}-1}$, so $\gamma_S\prn{x}=g\prn{x}$ whenever either exceeds $1$.
Both functions are positively homogeneous and positive at every $x\ne0$, because $S$ is bounded and $h_S>0$ on $\triangle_d$; hence scaling $x$ gives $\gamma_S=g$ on $\R^d_+$.
As in the proof of \cref{prop:infconv}, $g$ is Lipschitz continuous, so $\gamma_S$ attains its maximum over the compact set $U$, and $V_S\prn{U,S}=\max_{x\in U}d_S\prn{x}=\pos{\max_{x\in U}\gamma_S\prn{x}-1}$ by \cref{prop:infconv}. This proves \eqref{eq:gauge-excess}.

If the excess is positive, then $\lambda_*=\gamma_S(x_*)>1$ and $d_S\prn{x_*}=\lambda_*-1$. By positive homogeneity, $\gamma_S\prn{q_*}=\gamma_S\prn{x_*}/\lambda_*=1$, so $d_S\prn{q_*}=0$ and $q_*\in S$ by \cref{prop:infconv}. For $t>1$, $\gamma_S\prn{tq_*}=t>1$ gives $tq_*\notin S$; letting $t\downarrow1$ shows that $q_*$ lies on the boundary of $S$.

Every $w\in\triangle_d$ satisfying $h_S\prn{w}=\inpr{w,q_*}$ obeys, by $x_*=\lambda_*q_*$,
\begin{equation}
\inpr{w,x_*}-h_S\prn{w}=\prn{\lambda_*-1}h_S\prn{w}=d_S\prn{x_*}\,h_S\prn{w},
\end{equation}
so \cref{lem:certificate} with $E=S$ gives $\inpr{w,x_*}=h_U\prn{w}$ and the maximality of $w$.
\end{proof}

Next, we consider a smooth safe region $S$ to get a closed form of the direction score maximizer $w_*$.
\begin{proof}[Proof of \cref{prop:gauge-smooth}]
Continuity and $f\prn{0}<\theta$ give $\rho\triangle_d^{\le}\subseteq S$ for some $\rho>0$, so \eqref{eq:positivity} holds. By \cref{prop:gauge}, $q_*\in S$, and $\sum_i q_{*,i}=\lambda_*^{-1}\sum_i x_{*,i}<1$. If $f\prn{q_*}<\theta$, continuity gives a number $\eta>1$ sufficiently close to one that $\eta q_*\in\triangle_d^{\le}$ and $f\prn{\eta q_*}<\theta$. Then $\eta q_*\in S$ and $x_*\in\prn{\lambda_*/\eta}S$, contradicting the definition of $\lambda_*$. Thus $f\prn{q_*}=\theta$.

Monotonicity of $f$ gives $\nabla f\prn{q_*}\ge0$. For each $x\in S$ and $t\in\brc{0,1}$, convexity of $S$ implies $f\prn{q_*+t\prn{x-q_*}}\le\theta=f\prn{q_*}$. Taking the right derivative at $t=0$ yields $\inpr{\nabla f\prn{q_*},x-q_*}\le0$. If the gradient is nonzero, its normalization therefore belongs to $\triangle_d$ and supports $S$ at $q_*$. The conclusion follows from \cref{prop:gauge}.
\end{proof}

\end{revision}
\begin{revision}
\section{Computations for the Example}
\label{app:examples}

For the sets in \cref{ex:boxes} and $w\in\triangle_2$, the support functions are
\begin{equation}
h_U\prn{w}=\frac{3w_1+w_2}{4},\qquad
h_S\prn{w}=\frac{\max\set{w_1,w_2}}{4}.
\label{eq:example-supports}
\end{equation}
The score in \eqref{eq:score} divides their difference by
$\norm{w}_2$, $1$, or $h_S\prn{w}$ for the three units in
\eqref{eq:three-units}, respectively.
Since $\max\set{w_1,w_2}\ge w_1$, \eqref{eq:example-supports} gives
$h_U\prn{w}-h_S\prn{w}\le\prn{2w_1+w_2}/4$.
The Cauchy--Schwarz inequality therefore yields
$v_{E_2}\prn{w}\le\sqrt5/4$, with equality at $w=\prn{2/3,1/3}$.
Since $w_1+w_2=1$, the same support-gap bound gives
$v_{E_\infty}\prn{w}\le1/2$, with equality at $w=\prn{1,0}$.
Finally, \eqref{eq:example-supports} gives $h_U\prn{w}\le4h_S\prn{w}$,
so $v_S\prn{w}\le3$, with equality at $w=\prn{1/2,1/2}$.

The point $q_* = \prn{1/4,0}$ is the Euclidean projection of
$x_* = \prn{3/4,1/4}$ because
$\inpr{x_*-q_*,s-q_*} = \prn{2s_1+s_2}/4-1/8\le0$
for every $s\in S$.
Its distance from $x_*$ is $\sqrt5/4$, matching the maximal distance
in \cref{prop:l2}.
The $L^\infty$ distance from $x_*$ to $S$ is $1/2$:
every $s\in S$ satisfies $s_1\le1/4$, and $q_*$ attains this bound.
This matches the maximal distance in \cref{prop:linf}, and
$q_* = \pos{x_* - \prn{1/2}\mathbf1}$.
For $w = \prn{1/2,1/2}$, we have
$h_S\prn{w}=\inpr{w,q_*}=1/8$, so $w$ supports $S$ at $q_*$,
but $v_{E_\infty}\prn{w}=1/2-1/8=3/8<1/2$.
\end{revision}

\section{Omitted Details of Experiments}
\label{app:experimental-details}
In~\cref{tab:all-natural-query-counts}, we report the number of
oracle calls made for each benchmark--method pair.  For the safe Gridworld instances, the $L^2$ and $L^\infty$ methods
return \textsc{unknown} at dimension $d=10$, before reaching the refinement-iteration limit.
These early terminations reflect numerical limitations of our
current implementation.
In the former case, a safe-region support computation failed
the numerical optimality check.
In the latter, outer-polytope vertex enumeration encountered
a numerical failure, and the combinatorial fallback was not
attempted because the problem exceeded its configured size limit.

\begin{table*}[t]
\centering
\caption{Oracle calls on all 24 benchmark cases. The dimension $d$ is the number of target states in MDPs. A dagger (${}^\dagger$) marks an \UNKNOWN{} result.}
\label{tab:all-natural-query-counts}
\small
\setlength{\tabcolsep}{6pt}
\begin{center}
\begin{tabular}{@{}llrrrrrr@{}}
\toprule
Benchmark & Expected & $d$ & $L^2$ & $L^\infty$
& Gauge & LK-$\epsilon$ & LK-$\gamma$ \\
\midrule
Gridworld & \SAFE{} & 4
& 8 & 11 & 3 & 14 & 6 \\
Gridworld & \SAFE{} & 6
& 16 & 16 & 3 & 49 & 9 \\
Gridworld & \SAFE{} & 8
& 38 & $50^\dagger$ & 6 & $50^\dagger$ & 16 \\
Gridworld & \SAFE{} & 10
& $23^\dagger$ & $42^\dagger$ & 4 & $50^\dagger$ & 26 \\
\addlinespace
Robot mission & \SAFE{} & 4
& 5 & 5 & 4 & 6 & 6 \\
Robot mission & \SAFE{} & 6
& 7 & 7 & 6 & 7 & 7 \\
Robot mission & \SAFE{} & 8
& 9 & 9 & 8 & $50^\dagger$ & 37 \\
Robot mission & \SAFE{} & 10
& 11 & 11 & 11 & $50^\dagger$ & $50^\dagger$ \\
\addlinespace
Network routing & \SAFE{} & 4
& 9 & 13 & 5 & 15 & 15 \\
Network routing & \SAFE{} & 6
& 25 & 23 & 7 & $50^\dagger$ & 40 \\
Network routing & \SAFE{} & 8
& 46 & 47 & 9 & $50^\dagger$ & $50^\dagger$ \\
Network routing & \SAFE{} & 10
& $50^\dagger$ & $50^\dagger$ & 12
& $50^\dagger$ & $50^\dagger$ \\
\midrule
Gridworld & \UNSAFE{} & 4
& 1 & 1 & 1 & 1 & 1 \\
Gridworld & \UNSAFE{} & 6
& 1 & 1 & 1 & 1 & 1 \\
Gridworld & \UNSAFE{} & 8
& 1 & 1 & 1 & 1 & 1 \\
Gridworld & \UNSAFE{} & 10
& 1 & 1 & 1 & 1 & 1 \\
\addlinespace
Robot mission & \UNSAFE{} & 4
& 5 & 5 & 2 & 16 & 16 \\
Robot mission & \UNSAFE{} & 6
& 7 & 7 & 1 & $50^\dagger$ & $50^\dagger$ \\
Robot mission & \UNSAFE{} & 8
& 9 & 9 & 4 & $50^\dagger$ & $50^\dagger$ \\
Robot mission & \UNSAFE{} & 10
& 11 & 11 & 1 & $50^\dagger$ & $50^\dagger$ \\
\addlinespace
Network routing & \UNSAFE{} & 4
& 1 & 1 & 1 & 1 & 1 \\
Network routing & \UNSAFE{} & 6
& 3 & 3 & 3 & 3 & 3 \\
Network routing & \UNSAFE{} & 8
& 9 & 9 & 1 & 34 & 15 \\
Network routing & \UNSAFE{} & 10
& 9 & 9 & 7 & 1 & 1 \\
\bottomrule
\end{tabular}

\par\smallskip
\end{center}
\end{table*}

\section{Relation to Quality Indicator}
\label{sec:indicatorEnc}
We relate the quality indicators introduced
by~\citet[Definitions~2.1 and~2.2, and Section~5.2]{lammel2023convergence}
to our excess $V_E\prn{U,S}$ in \eqref{eq:score}.
Their quality indicators measure the difference of outer and inner approximations of a Pareto front, while our excess is used to measure the difference of an outer approximation and a safe region.
Let $I\subseteq O$ be inner and outer convex polytopes satisfying
\cref{ass:sets} with $S=I$ and $U=O$.

In this convention, the epsilon indicator is $\delta_\epsilon(I,O)
\coloneqq
\min\{\epsilon\ge0:O\subseteq I+\epsilon[0, 1]^d\}$. Thus, by~\cref{thm:containment},
we have 
$\delta_\epsilon(I,O)=V_{[0,1]^d}(O,I)$. For the polyhedral gauge, additionally assume that $I$
satisfies \eqref{eq:positivity}, and define
$\gamma_I(z)\coloneqq
\min\{\lambda\ge0:z\in\lambda I\}$.
The associated approximation quality indicator~\citep[Section~5.2]{lammel2023convergence} is
\[
\delta_\gamma(I,O)
\coloneqq
\left[\max_{z\in O}\gamma_I(z)-1\right]_+.
\]
Since $I+\epsilon I=(1+\epsilon)I$, we obtain
\[
\delta_\gamma(I,O)
=
\min\{\epsilon\ge0:O\subseteq(1+\epsilon)I\}
=
V_I(O,I)
\]
by~\cref{thm:containment}.

\end{document}